 \newtheorem{thm}{}[section]
 \newtheorem{theorem}[thm]{Theorem}
 \newtheorem{corollary}[thm]{Corollary}
 \newtheorem{lemma}[thm]{Lemma}
 \newtheorem{proposition}[thm]{Proposition}
  \theoremstyle{definition}
 \newtheorem{definition}[thm]{Definition}
 \theoremstyle{remark}
 \newtheorem{remark}[thm]{Remark}
 \newtheorem{question}[thm]{Question}
\newtheorem{example}[thm]{Example}
 \numberwithin{equation}{section}
\newcommand{\LL}{\ensuremath{\mathcal{L}}}
\newcommand{\NN}{\ensuremath{\mathbb{N}}}
\newcommand{\xx}{\ensuremath{\mathbf{x}}}
\newcommand{\yy}{\ensuremath{\mathbf{y}}}
\newcommand{\zz}{\ensuremath{\mathbf{z}}}
\newcommand{\ee}{\ensuremath{\mathbf{e}}}
\newcommand{\ww}{\ensuremath{\mathbf{w}}}
\newcommand{\vv}{\ensuremath{\mathbf{v}}}
\newcommand{\gb}{\ensuremath{\mathbf{g}}}
\newcommand{\ff}{\ensuremath{\mathbf{f}}}
\newcommand{\dd}{\ensuremath{\mathbf{d}}}
\newcommand{\uu}{\ensuremath{\mathbf{u}}}
\newcommand{\WW}{\ensuremath{\mathcal{W}}}
\newcommand{\OO}{\ensuremath{\mathcal{O}}}
\newcommand{\FF}{\ensuremath{\mathbb{F}}}
\newcommand{\id}{\ensuremath{\mathrm{Id}}}
\newcommand{\RR}{\ensuremath{\mathbb{R}}}
\DeclareMathOperator{\supp}{supp}
\DeclareMathOperator{\rang}{R}
\begin{document}

\title[On Garling sequence spaces]{On Garling sequence spaces}

\author[F. Albiac]{Fernando Albiac}
\address{Mathematics Department\\ 
Universidad P\'ublica de Navarra\\
Campus de Arrosad\'{i}a\\
Pamplona\\ 
31006 Spain}
\email{fernando.albiac@unavarra.es}

\author[J. L. Ansorena]{Jos\'e L. Ansorena}
\address{Department of Mathematics and Computer Sciences\\
Universidad de La Rioja\\ 
Logro\~no\\
26004 Spain}
\email{joseluis.ansorena@unirioja.es}

\author[B. Wallis]{Ben Wallis}
\address{Department of Mathematical Sciences\\ Northern Illinois University\\ DeKalb, IL 60115 U.S.A.}
\email{benwallis@live.com}

\begin{abstract} The aim of this paper is to introduce and investigate a new class of separable Banach spaces modeled after an example of Garling from 1968.  For each $1\leqslant p<\infty$ and each  nonincreasing weight $\textbf{w}\in c_0\setminus\ell_1$ we exhibit an $\ell_p$-saturated, complementably homogeneous, and uniformly subprojective Banach space $g(\textbf{w},p)$.  We also show that $g(\textbf{w},p)$ admits a unique  subsymmetric basis  despite the fact that for a wide class of weights it does not admit a symmetric basis.  This provides the first known examples of Banach spaces where those two properties  coexist. \end{abstract}

\subjclass[2010]{46B25, 46B45, 46B03}

\keywords{sequence spaces, Lorentz spaces, subsymmetric bases, symmetric bases}


\thanks{F. Albiac acknowledges the support of the  Spanish Ministry for Economy and Competitivity Grants MTM2014-53009-P  for \textit{An\'alisis Vectorial, Multilineal y Aplicaciones},  and MTM2016-76808-P for  \textit{ Operators, lattices, and structure of Banach spaces}.  J.L. Ansorena  acknowledges the support of the  Spanish Ministry for Economy and Competitivity Grant MTM2014-53009-P for  \textit{An\'alisis Vectorial, Multilineal y Aplicaciones}}

\maketitle

\section{Introduction, notation, and terminology}\label{Sec1}

\noindent Let  $1\le p<\infty$ and let $\ww=(w_n)_{n=1}^\infty$ be a weight, i.e., a sequence of positive scalars. 
Assume that $\ww$  is decreasing.
  Given a sequence  of scalars $f=(a_n)_{n=1}^\infty$ we put
\[
 \Vert  f  \Vert_{g(\ww,p)} = \sup_{\phi\in\OO } \left( \sum_{n=1}^\infty |a_{\phi(n)}|^p w_n \right)^{1/p}
 \]
where $\OO$ denotes the set of increasing functions from $\NN$ to $\NN$. 
We consider the normed space 
\[
g(\ww,p):=\{f=(a_n)_{n=1}^\infty\colon     \Vert  f  \Vert_{g(\ww,p)}<\infty\}.
\]
A straightforward application of Fatou's lemma gives that $g(\ww,p)$ is a Banach space. 
 Imposing
the further conditions $\ww\in c_0$ and $\ww\notin\ell_1$ will prevent us, respectively, from having $g(\ww,p)=\ell_p$ or $g(\ww,p)= \ell_\infty$. We will  assume as well  that $\ww$ is normalized, i.e., $w_1=1$.
Thus, we put
\[\WW:=\left\{(w_n)_{n=1}^\infty\in c_0\setminus\ell_1:1=w_1\geq w_2>\cdots w_n \ge w_{n+1} \ge\cdots>0\right\}.\]
The germ of this family of spaces goes back to \cite{Ga68}, where  Garling  showed that  for  $\ww=(n^{-1/2})_{n=1}^{\infty}$ the  canonical  unit vectors of $g(\ww, 1)$ form a subsymmetric basic sequence 
that is not  symmetric. Indeed, until then these two concepts were believed to be the same. 
Other Garling sequence spaces have appeared in past literature since then. In  \cite{Pu76}, Pujara  investigated  a variation of $g(\ww,2)$ for  $\ww=(n^{-1/2})_{n=1}^{\infty}$. He proved that those spaces are  uniformly convex, and that their canonical basis is subsymmetric but not symmetric. Dilworth el al. \cite{DOSZ11}  studied  derived forms of 
$g(\ww,1)$ for various choices of weights $\ww$ that yield   spaces in which the canonical basis  is 1-greedy and subsymmetric but not symmetric.

Of course, all these spaces are inspired in \textbf{sequence Lorentz spaces}  defined as
 \[
d(\ww,p)=\{f=(a_n)_{n=1}^\infty \colon \Vert  f\Vert_{d(\ww,p)}<\infty\},
\]
where, if $\Pi$ is the set of permutations on $\NN$, 
\[
 \Vert  f\Vert_{d(\ww,p)}=\sup_{\sigma\in\Pi}  \left(\sum_{n=1}^{\infty} |a_{\sigma(n)}|^p w_n\right)^{1/p}.
  \]
  The main difference between Lorentz sequence spaces and Garling sequence spaces  is that, unlike for $d(\ww, p)$, the canonical basis in $g(\ww, p)$ is not  symmetric  for a wide class of weights. We will tackle this issue in Section~\ref{sec4}. The canonical basis is  subsymmetric, though. Moreover,  while $d(\ww, p)$ is known to admit a unique symmetric basis, in Section~\ref{sec3} we prove that $g(\ww, p)$ has a unique subsymmetric basis.
  Along the way to proving the uniqueness of subsymmetric basis, we will
 show in  Section~\ref{sec2}  that many structural results of $d(\ww, p)$ carry over to $g(\ww, p)$.
To be precise, we will show that $g(\ww,p)$ is complementably homogeneous and uniformly complementably $\ell_p$-saturated.  As a consequence of the latter, $g(\ww,p)$ is uniformly subprojective.
Both subprojectivity and complementable homogeneity have been used in the study of the closed ideal structure of the algebra of endomorphisms of a Banach space.  Subprojectivity was introduced in \cite{Wh64} in order to study the closed operator ideal of strictly singular operators.  For instance, if $X$ and  $Y$ are Banach spaces, $Y$ is subprojective and $T\in\LL(X,Y)$, then  the dual operator $T^*$ is strictly singular only if $T$ is. If furthermore $X=Y$ and is reflexive, the converse holds, i.e., if $X$ is reflexive then $T\in\LL(X)$ is strictly singular if and only if $T^*$ is.  See \cite{OS15} for further discussion of subprojectivity.
Complementable homogeneity, meanwhile, has been used in several recent papers (cf., e.g., \cite{CJZ11,LSZ14,Zh14}) to show the existence of a unique maximal ideal in $\mathcal{L}(X)$.  In particular, the set of all $X$-strictly singular operators---those for which the restrictions to subspaces isomorphic to $X$ are never bounded below---acting on a complementably homogeneous Banach space $X$ forms the unique maximal ideal in $\mathcal{L}(X)$ whenever it is closed under addition.  By an argument in \cite[Corollary 5.2]{KPSTT12} and the second paragraph of the proof of \cite[Theorem 5.3]{KPSTT12}, it follows that the set of $g(\ww,p)$-strictly singular operators forms the unique maximal ideal in $\mathcal{L}(g(\ww,p))$.

We close this introductory section by  setting the notation and recalling the terminology that will be most heavily used. 
We will write  $\FF$ for the real or complex field and denote by
 $(\ee_n)_{n=1}^\infty$  the canonical basis of $\FF^\NN$, i.e.,  $\ee_n=(\delta_{k,n})_{k=1}^\infty$, were
$\delta_{k,n}=1$ if $n=k$ and $\delta_{k,n}=0$ otherwise. On occasion  we will need to compare linear combinations of the vectors $(\ee_n)_{n=1}^\infty$ with respect to different norms; thus to avoid  confusion we may  denote by $(\dd_n)_{n=1}^\infty$, $(\ff_n)_{n=1}^\infty$, and $(\gb_n)_{n=1}^\infty$  the  vectors $(\ee_{n})_{n=1}^{\infty}$ when seen, respectively,  inside the spaces  $d(\ww,p)$, $\ell_{p}$,  or $g(\ww,p)$. Also for simplicity, when $p$ and $\ww$ are clear from context the norms in the spaces $d(\ww,p)$ and $g(\ww,p)$ will be denoted by $\Vert\cdot\Vert_{d}$ and $\Vert\cdot\Vert_{g}$, respectively.
As it customary, we write $c_{00}$ for  the space of all scalar sequences with finitely many nonzero entries.  If $(\xx_n)_{n=1}^\infty$ is a basis for a Banach space $X$, and $f=\sum_{n=1}^\infty a_n \xx_n\in X$, then  the \textbf{support} of $f$ is the set $\supp(f):=\{n\in\NN:a_n\neq 0\}$ of indices corresponding to its nonzero entries. 
Given a basic sequence $(\zz_n)_{n=1}^\infty$ in $X$, $\langle \zz_n\rangle_{n=1}^{\infty}$ denotes its linear span and $[\zz_{n}]_{n=1}^{\infty}$ will be its closed linear span.

 Given families of positive real numbers $(\alpha_i)_{i\in I}$ and $(\beta_i)_{i\in I}$, the symbol $\alpha_i\lesssim \beta_i$ for $i\in I$ means that $\sup_{i\in I}\alpha_i/\beta_i <\infty$, while $\alpha_i\approx \beta_i$ for $i\in I$ means that $\alpha_i\lesssim \beta_i$ and $\beta_i\lesssim \alpha_i$ for $i\in I$. 
Now suppose $(\xx_n)_{n=1}^\infty$ and $(\yy_n)_{n=1}^\infty$ are basic sequences in $X$ and $Y$, respectively.  If for some positive $C$,
$$\left\Vert\sum_{n=1}^\infty a_n\xx_n\right\Vert_X\leq C\left\Vert\sum_{n=1}^\infty a_n\yy_n\right\Vert_Y,
\quad  (a_n)_{n=1}^\infty\in c_{00},$$
 we say that $(\yy_n)_{n=1}^\infty$ $C$-\textbf{dominates} $(\xx_n)_{n=1}^\infty$, and write $(\xx_n)_{n=1}^\infty\lesssim_C(\yy_n)_{n=1}^\infty$.  When the value of the constant is irrelevant we will simply say that $(\yy_n)_{n=1}^\infty$ \textbf{dominates} $(\xx_n)_{n=1}^\infty$, and write $(\xx_n)_{n=1}^\infty\lesssim(\yy_n)_{n=1}^\infty$.  Whenever $(\xx_n)_{n=1}^\infty\lesssim_C(\yy_n)_{n=1}^\infty$ and $(\yy_n)_{n=1}^\infty\lesssim_C(\xx_n)_{n=1}^\infty$, we say that $(\xx_n)_{n=1}^\infty$ and $(\yy_n)_{n=1}^\infty$ are $C$-\textbf{equivalent}, and write $(\xx_n)_{n=1}^\infty\approx_C(\yy_n)_{n=1}^\infty$ or, simply, $(\xx_n)_{n=1}^\infty\approx(\yy_n)_{n=1}^\infty$.  

By a \textbf{sign} we mean a  scalar of modulus one. 
A basic sequence $(\xx_n)_{n=1}^\infty$ is called \textbf{unconditional} if $(\xx_{\sigma(n)})_{n=1}^\infty$ is a basic sequence for any  $\sigma\in\Pi$. It is well known that a basic sequence  $(\xx_n)_{n=1}^\infty$  is unconditional if and only if there exists a constant $C\ge 1$ so that  $(\xx_n)_{n=1}^\infty\approx_C  (\epsilon_n\xx_n)_{n=1}^\infty$ for any choice of signs $(\epsilon_n)_{n=1}^\infty$. A basic sequence $(\xx_n)_{n=1}^\infty$ is called \textbf{subsymmetric} if it is unconditional and equivalent to all its subsequences.  It is called \textbf{symmetric} whenever it is  equivalent to each of its permutations.  If $(\xx_n)_{n=1}^\infty$ is a subsymmetric basic sequence then there is a  constant $C\geq 1$ such that $(\xx_n)_{n=1}^\infty\approx_C  (\epsilon_n \xx_{\phi(n)})_{n=1}^\infty$ for any $\phi\in\OO$ and any choice of signs $(\epsilon_n)_{n=1}^\infty$.  In this case we say that $(\xx_n)_{n=1}^\infty$ is $C$-\textbf{subsymmetric}.  Similarly, if $(\xx_n)_{n=1}^\infty$ is symmetric then there is $C\geq 1$ such that $(\xx_n)_{n=1}^\infty\approx_C (\epsilon_n \xx_{\sigma(n)})_{n=1}^\infty$ for any choice of signs $(\epsilon_n)_{n=1}^\infty$ and any $\sigma\in\Pi$, in which case we say that $(\xx_n)_{n=1}^\infty$ is $C$-\textbf{symmetric}.  Note that $C$-symmetry implies $C$-subsymmetry, which in turn implies $C$-unconditionality. Note also that every subsymmetric basis $(\xx_n)_{n=1}^\infty$ is \textbf{semi-normalized}, i.e., $\Vert \xx_n\Vert \approx 1$ for $n\in\NN$.

Given a function $\phi$ we denote 
by $\rang(\phi)$ its range.
Let  $\OO_f$ be the set of increasing functions from an integer interval $[1,\dots,r]\cap \NN$  into $\NN$.  Given 
 $\phi\in\OO_f$ we denote by $r(\phi)$ the largest integer in its domain.  
A function in $\OO_f$ is univocally determined by its range.
For $\ww=(w_n)_{n=1}^\infty$, $1\le p<\infty$, and $f=(a_n)_{n=1}^\infty\in\FF^\NN$ 
we have
\begin{equation}\label{SupGarling}
\Vert f\Vert_g^p=\sup_{\phi\in\OO_f}\sum_{n=1}^{r(\phi)} |a_{\phi(n)}|^p w_n.
\end{equation}

A Banach space with an unconditional (respectively, subsymmetric or symmetric) basis   is said to have a \textbf{unique unconditional} (respectively, \textbf{unique subsymmetric} or \textbf{unique symmetric basis}) if any two semi-normalized unconditional  (respectively, subsymmetric or symmetric) bases of $X$ are equivalent. 

Given  infinite-dimensional Banach spaces $X$ and $Y$, $\LL(X,Y)$ denotes the space of bounded linear operators from $X$ into $Y$, and  put $\LL(X)=\LL(X,X)$.
 The symbol $X\approx Y$ means that $X$ and $Y$ are isomorphic. 
The space $X$ is said to be $Y$-\textbf{saturated} (or  \textbf{hereditarily} $Y$) if every infinite-dimensional closed subspace of $X$ admits a further subspace $Z\approx Y$. If, in addition, there is a constant $C$ such that $Z$ can be chosen $C$-complemented in $X$ we say that $X$ is \textbf{uniformly complementably $Y$-saturated}. A Banach space $X$ is said to be \textbf{complementably homogeneous} whenever for every closed subspace $Y$ of $X$ such that $Y\approx X$, there exists another closed subspace $Z$ such that $Z$ is complemented in $X$, $Z\approx X$, and $Z\subseteq Y$. 
A Banach space $X$ is said to be \textbf{uniformly subprojective}  (see \cite{OS15}) if there is a constant $C$ such that for every infinite-dimensional subspace $Y\subseteq X$ there is a further subspace $Z\subseteq Y$ that is $C$-complemented in $X$.

Most other terminology is standard, such as might appear, for instance, in \cite{AK2016}.  

\section{Preliminaries}\label{prel}
\noindent
 In order to state some  embeddings involving Garling sequence spaces we will invoke
the \textit{weak Lorentz sequence space} $d_\infty(\ww,p)$ for $\ww\in\WW$ and $1\le <\infty$,  consisting of all  sequences $f=(a_{n})_{n=1}^{\infty}\in c_0$ so that 
  \[
\Vert f\Vert_{d_\infty(\ww,p)} =\sup_n \left(\sum_{k=1}^n w_k\right)^{1/p} a_n^*<\infty,
 \]
where
 $(a_n^*)_{n=1}^\infty$  denotes the decreasing rearrangement of $f$. Notice that $d_\infty(\ww,p)$ is a quasi-Banach space (see \cite{CRS2007}).

 \begin{lemma}\label{WeakLorentzChar}
 Suppose  $1\le p<\infty$ and $\ww=(w_n)_{n=1}^\infty\in\WW$. A sequence   $f=(a_n)_{n=1}^\infty\in \FF^{\NN}$ belongs to  the space $d_\infty(\ww,p)$ if and only if
 $D_{f}:=\sup_n \Vert \sum_{k=1}^n a_k \ee_k\Vert_{d_\infty(\ww,p)} <\infty$, in which case $\Vert f\Vert_{d_\infty(\ww,p)}=D_{f}$.
 \end{lemma}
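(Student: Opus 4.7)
The plan is to prove the two inequalities $D_f\le\|f\|_{d_\infty(\ww,p)}$ and $\|f\|_{d_\infty(\ww,p)}\le D_f$ separately. The first is essentially trivial: writing $f_n=\sum_{k=1}^n a_k\ee_k$, one checks that the decreasing rearrangement of $f_n$ is dominated termwise by that of $f$, i.e., $(f_n^*)_k\le a_k^*$ for every $k$. Taking the $d_\infty(\ww,p)$ norm of both sides and the supremum over $n$ yields $D_f\le \|f\|_{d_\infty(\ww,p)}$, which already covers the case $f\in d_\infty(\ww,p)$.

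The substantive content is the converse: assuming only that $D_f<\infty$, we must first verify that $f\in c_0$ so that the decreasing rearrangement of $f$ even makes sense within the framework of the definition of $d_\infty(\ww,p)$ given above. Suppose towards a contradiction that $\limsup_k |a_k|=c>0$. Then for any $N$ there exist indices $k_1<\cdots<k_N$ with $|a_{k_j}|>c/2$, so the finite truncation $f_{k_N}$ has at least $N$ entries of modulus exceeding $c/2$. Consequently $(f_{k_N}^*)_N\ge c/2$, and hence
\[
D_f\ge \|f_{k_N}\|_{d_\infty(\ww,p)}\ge \Bigl(\sum_{j=1}^N w_j\Bigr)^{1/p}\frac{c}{2}.
\]
Since $\ww\notin\ell_1$, the right-hand side tends to infinity with $N$, contradicting $D_f<\infty$. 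Thus $f\in c_0$, and the decreasing rearrangement $(a_n^*)_{n=1}^\infty$ is well defined.

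With $f\in c_0$ in hand, fix $n\in\NN$ and choose indices $k_1<\cdots<k_n$ whose corresponding entries $|a_{k_1}|,\dots,|a_{k_n}|$ are the $n$ largest values of $(|a_k|)_{k=1}^\infty$; in particular $|a_{k_j}|\ge a_n^*$ for each $j$. The truncation $f_{k_n}$ contains all of these entries, so $(f_{k_n}^*)_n\ge a_n^*$, and therefore
\[
\Bigl(\sum_{j=1}^n w_j\Bigr)^{1/p} a_n^*\le \Bigl(\sum_{j=1}^n w_j\Bigr)^{1/p}(f_{k_n}^*)_n\le \|f_{k_n}\|_{d_\infty(\ww,p)}\le D_f.
\]
Taking the supremum over $n$ gives $\|f\|_{d_\infty(\ww,p)}\le D_f$, which combined with the easy direction yields equality, and in particular $f\in d_\infty(\ww,p)$.

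The main obstacle is the membership step $f\in c_0$: nothing in the definition of $D_f$ a priori rules out sequences that merely lie in $\ell_\infty$, and the argument genuinely requires the non-summability of $\ww$, which is exactly the hypothesis $\ww\in\WW\subset c_0\setminus\ell_1$. Once that is secured, the rest amounts to observing that the $n$-th decreasing entry of $f$ is already captured by a sufficiently long finite truncation.
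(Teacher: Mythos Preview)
Your proof is correct and follows essentially the same strategy as the paper's: first use $\ww\notin\ell_1$ to force $f\in c_0$ from $D_f<\infty$, then observe that each $a_n^*$ is already witnessed by a sufficiently long truncation, and finally note the easy inequality $D_f\le\|f\|_{d_\infty(\ww,p)}$ from monotonicity of the decreasing rearrangement under truncation. The only cosmetic differences are that the paper phrases the $c_0$ step via an $\varepsilon$-level-set argument rather than a $\limsup$ contradiction, and selects the truncation index $r$ by the condition $|a_k|<a_n^*$ for $k>r$ rather than by locating the $n$ largest entries; these are equivalent formulations of the same idea.
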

 \begin{proof} We may  suppose  that $\{n\in\NN \colon a_n\not=0\}$ is infinite (otherwise there is nothing to prove). Put  $s_n= (\sum_{k=1}^n w_k)^{1/p}$. 
  Assume that $D_{f}<\infty$. In order to  show that $f\in c_0$ it suffices to see that for each $\varepsilon>0$ the set $\{k\in\NN \colon |a_k|\ge \varepsilon \}$ is finite.
 Let $m\in\NN$ be such that $D_{f}<\varepsilon  s_m$.  For a given $j$, denote by $(b_n^*)_{n=1}^\infty$ the decreasing rearrangement of 
  $\sum_{n=1}^j a_n \ee_n$. We have $b_m^*<\varepsilon$ and so $|\{ n\le j \colon |a_n|\ge\varepsilon \}|\le m-1$. Since $j$ is arbitrary, we are done.
  
  Now we consider the decreasing rearrangement $(a_n^*)_{n=1}^\infty$ of $f$. Given $n\in\NN$, pick $r\in\NN$ such that $|a_k|<a_n^*$ for $k>r$. We have $a_n^*=c_n^*$, where $(c_n^*)_{n=1}^\infty$ is the decreasing rearrangement of  $\sum_{n=1}^r a_n \ee_n$.  Therefore
  $a_n^* s_n \le D_{f}$. In other words, $f\in d_\infty(\ww,p)$ and $\Vert f\Vert_{d_\infty(\ww,p)}\le D_{f}$. 
  
  The reverse inequality  and the converse implication are obvious.
 \end{proof}

\begin{proposition}\label{embeddings} Let $1\le p<\infty$ and $\ww\in\WW$. Then 
\[\ell_p\subsetneq d(\ww,p)\subseteq g(\ww,p)\subseteq d_\infty(\ww,p),\] with norm-one inclusions.
\end{proposition}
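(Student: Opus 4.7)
The plan is to establish the three inclusions separately as short direct estimates, each leveraging that a later norm is a supremum over a different (and more restrictive) class of transformations. In every case the norm-one assertion is witnessed at $\ee_1$, which has norm $1$ in each of the four spaces.

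For $\ell_p\hookrightarrow d(\ww,p)$: from $w_n\le w_1=1$ and the rearrangement formula $\|f\|_d^p=\sum_n(a_n^*)^p w_n$ one reads off $\|f\|_d\le\|f\|_p$ at once. \emph{Strictness is the main obstacle}, since it is the only step not reducible to a one-line comparison and the only one that genuinely uses both hypotheses $\ww\in c_0$ and $\ww\notin\ell_1$. The construction I have in mind is block-constant: pick $n_1<n_2<\cdots$ with blocks of strictly increasing length satisfying $w_{n_k}\le 2^{-k}$ (possible since $\ww\in c_0$), and set $a_n:=(n_{k+1}-n_k)^{-1/p}$ for $n\in[n_k,n_{k+1})$. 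Then $a$ is nonincreasing, $\|a\|_d^p\le\sum_k w_{n_k}\le 1$, while $\|a\|_p^p=\sum_k 1=\infty$.

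For $d(\ww,p)\hookrightarrow g(\ww,p)$: given $\phi\in\OO$, the sequence $\widetilde f:=(a_{\phi(n)})_n$ is a sub-selection of $f$, so its decreasing rearrangement is termwise dominated: $\widetilde f_n^*\le a_n^*$ for every $n$. Because $(w_n)$ is decreasing, the classical rearrangement inequality gives
\[
\sum_n|a_{\phi(n)}|^p w_n\le\sum_n(\widetilde f_n^*)^p w_n\le\sum_n(a_n^*)^p w_n=\|f\|_d^p,
\]
and taking the supremum over $\phi$ yields $\|f\|_g\le\|f\|_d$.

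For $g(\ww,p)\hookrightarrow d_\infty(\ww,p)$: I invoke Lemma \ref{WeakLorentzChar} to reduce the problem to finitely supported partial sums. Fix $N\in\NN$ and set $f_N:=\sum_{k=1}^N a_k\ee_k$. For each $n\le N$, let $i_1<\cdots<i_n$ be the positions in $[1,N]$ of the $n$ largest values of $|a_k|$; the corresponding $\phi\in\OO_f$ with $\rang(\phi)=\{i_1,\dots,i_n\}$ satisfies $|a_{\phi(j)}|\ge f_{N,n}^*$ for each $j$, whence by \eqref{SupGarling}
\[
(f_{N,n}^*)^p\sum_{j=1}^n w_j\le\sum_{j=1}^n|a_{\phi(j)}|^p w_j\le\|f_N\|_g^p\le\|f\|_g^p.
\]
Taking the supremum over $n$ gives $\|f_N\|_{d_\infty(\ww,p)}\le\|f\|_g$ for every $N$, and then Lemma \ref{WeakLorentzChar} lifts this to $\|f\|_{d_\infty(\ww,p)}\le\|f\|_g$ for all $f\in g(\ww,p)$, completing the chain.
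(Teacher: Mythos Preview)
Your argument is correct and, for the inclusions $\ell_p\subseteq d(\ww,p)$ and $g(\ww,p)\subseteq d_\infty(\ww,p)$, matches the paper's (the paper simply calls the first ``clear and well known'' and treats the third exactly as you do, via Lemma~\ref{WeakLorentzChar} and the choice of $\phi$ hitting the $n$ largest coordinates). Two small points deserve attention.

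\medskip
\textbf{The strictness construction.} The idea is right, but the indexing slips. If $n_1>1$ and you set $a_n=0$ for $n<n_1$, then $a$ is not nonincreasing, and after passing to $a^*$ the blocks shift left by $n_1-1$, so the bound $w_{n_k}\le 2^{-k}$ no longer controls the contribution of the $k$-th block. If instead $n_1=1$, then $w_{n_1}=1>2^{-1}$. The trivial fix is to take $n_1=1$, impose $w_{n_k}\le 2^{-k}$ only for $k\ge 2$, and accept $\|a\|_d^p\le 1+\sum_{k\ge 2}2^{-k}<\infty$. Also, your aside that this step ``genuinely uses $\ww\notin\ell_1$'' is not accurate: only $\ww\in c_0$ enters.

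\medskip
\textbf{The middle inclusion.} For $d(\ww,p)\subseteq g(\ww,p)$ you route through the Hardy--Littlewood rearrangement inequality, which is perfectly valid. The paper's argument is a shade more elementary and avoids any rearrangement inequality: given $\phi\in\OO_f$, simply extend it to a permutation $\sigma\in\Pi$ and read off
\[
\sum_{n=1}^{r(\phi)} |a_{\phi(n)}|^p w_n=\sum_{n=1}^{r(\phi)} |a_{\sigma(n)}|^p w_n\le \sum_{n=1}^{\infty} |a_{\sigma(n)}|^p w_n\le \Vert f\Vert^p_{d(\ww,p)}
\]
directly from the definition of $\|\cdot\|_d$ as a supremum over $\Pi$. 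Your approach buys nothing extra here, but it is not wrong.
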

\begin{proof}The embedding  $\ell_p\subsetneq d(\ww,p)$ is clear  and well known. 

Let $f=(a_n)_{n=1}^\infty\in \FF^\NN$ and $\phi\in\OO_f$. Pick $\sigma\in\Pi$ extending $\phi$.
Then 
\[
\sum_{n=1}^{r(\phi)} |a_{\phi(n)}|^p w_n=\sum_{n=1}^{r(\phi)} |a_{\sigma(n)}|^p w_n\le \sum_{n=1}^{\infty} |a_{\sigma(n)}|^p w_n\le \Vert f\Vert^p_{d(\ww,p)}.
\]
Taking the supremum on  $\phi$ we get $\Vert f\Vert_{g(\ww,p)} \le \Vert f\Vert_{d(\ww,p)}$.

Now let  $f=(a_n)_{n=1}^\infty\in c_{00}$. For a given  $n\in \NN$ there is $\phi\in\OO_f$ with $r(\phi)=n$ and
$| a_{\phi(k)}|\ge a_n^*$ for $1\le k \le r(\phi)=n$. We have
\[
a_n^*\left(  \sum_{k=1}^n  w_k\right)^{1/p}\le \left(  \sum_{k=1}^n    |a_{\phi(k)}|^p w_k\right)^{1/p} 
\le \Vert f\Vert_{d(\ww,p)},
\]
so that $\Vert f\Vert_{d_\infty(\ww,p)} \le \Vert f\Vert_{g(\ww,p)}$. An appeal to Lemma~\ref{WeakLorentzChar} concludes the proof.
\end{proof}

Before proceeding with the proof of the aforementioned subsymmetry of the canonical basis of Garling sequence spaces, let us introduce some linear maps that will be handy. 

$\bullet$ Given a sequence of signs $\epsilon=(\epsilon_n)_{n=1}^\infty$ we define $M_\epsilon\colon \FF^\NN \to \FF^\NN$ by
\[
 \quad  M_\epsilon((a_n)_{n=1}^\infty)=(\epsilon_n a_n)_{n=1}^\infty.
\]

$\bullet$  The coordinate projection on  $A\subseteq \NN$ is defined by
\[
P_A\colon \FF^\NN \to \FF^\NN, \quad  P_A((a_n)_{n=1}^\infty)=(\lambda_n a_n)_{n=1}^\infty,
\]
where $\lambda_n=1$ if $n\in A$ and $\lambda_n=0$ otherwise. \newline

$\bullet$ Given $\phi\in\OO$ we define $V_\phi\colon  \FF^\NN \to \FF^\NN$  by $
 V_\phi( (a_n)_{n=1}^\infty)=(a_{\phi(n)})_{n=1}^\infty$,
 
and

$\bullet$ $U_\phi\colon  \FF^\NN \to \FF^\NN$ by $U_\phi((a_n)_{n=1}^\infty)=(b_n)_{n=1}^\infty$, where
\[ b_n=\begin{cases} a_k & \text{ if }n=\phi(k), \\ 0 &\text{ if }n\notin \rang(\phi). \end{cases}
\]
\begin{remark}\label{rmk:1}Given $\phi\in\OO$ and $n\in\NN$,
We have $U_\phi \circ V_\phi=\id_{\FF^\NN}$ and $V_\phi(\ee_n)=\ee_{\phi(n)}$.
\end{remark}

\begin{proposition}\label{subsymmetry} Let $\ww\in \WW$ and $1\le p<\infty$. Let $\epsilon=(\epsilon_n)_{n=1}^\infty$ be a sequence of signs, let $A$ be  a subset of $\NN$, and  let $\phi$ a map in $\OO$.
\begin{itemize}
\item[(i)]  $M_\epsilon$ and $P_A$ are  norm-one operators from $g(\ww,p)$ into $g(\ww,p)$.
\item[(ii)]  $V_\phi$ and $U_\phi$ are norm-one operators from $g(\ww,p)$ into $g(\ww,p)$.
\item[(iii)] $V_\phi$ is an isometric embedding from $g(\ww,p)$ into $g(\ww,p)$.
\item[(iv)]  The standard unit vectors form a $1$-subsymmetric basic sequence in $g(\ww,p)$.
\end{itemize}
\end{proposition}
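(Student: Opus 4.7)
The plan is to exploit formula~\eqref{SupGarling} together with the monotonicity of $\ww$, which allows one to ``compress'' any index set of an increasing map to lower positions without decreasing the associated sum; the four assertions then fall out in sequence.

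For (i), $M_\epsilon$ is an isometry directly from the definition, since $|\epsilon_n a_n|=|a_n|$ leaves the expression in~\eqref{SupGarling} unchanged. For $P_A$, I would fix any witness $\phi\in\OO_f$ and enumerate the surviving indices $\{n\leq r(\phi)\colon \phi(n)\in A\}$ as $n_1<\dots<n_r$; then $\psi(j):=\phi(n_j)$ again lies in $\OO_f$, and since $n_j\geq j$ while $\ww$ is decreasing, $w_{n_j}\leq w_j$, giving $\sum_{j}|a_{\psi(j)}|^p w_{n_j}\leq\sum_j|a_{\psi(j)}|^p w_j\leq\Vert f\Vert_g^p$. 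Taking the supremum over $\phi$ yields $\Vert P_A f\Vert_g\leq\Vert f\Vert_g$.

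For (ii) and (iii), I would treat the two operators in parallel. For the one that compresses $(a_n)$ to $(a_{\phi(n)})$, the upper bound is automatic: every witness $\psi\in\OO_f$ for the image produces $\phi\circ\psi\in\OO_f$ as a witness for $f$ with the same sum. For the one that places $a_k$ at coordinate $\phi(k)$, the upper bound again uses the enumeration trick from (i), since only the indices $n$ with $\psi(n)\in\rang(\phi)$ contribute. The matching lower bound, which upgrades this operator to an isometric embedding, comes from testing with $\psi:=\phi\circ\tau$ for arbitrary $\tau\in\OO_f$: every $\psi(n)$ lies in $\rang(\phi)$ so no terms are lost, the sum becomes exactly $\sum_n|a_{\tau(n)}|^p w_n$, and taking the supremum over $\tau$ reconstructs $\Vert f\Vert_g^p$.

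For (iv), I would synthesize the preceding items. The norm-one bound for the coordinate projections $P_{\{1,\dots,N\}}$ from (i) shows $(\gb_n)$ is a monotone basic sequence, while the isometry of $M_\epsilon$ yields $1$-unconditionality. For $1$-subsymmetry, given $\phi\in\OO$ and signs $(\epsilon_n)$, I would first use the isometric ``spread'' operator from (iii) to pass from $\sum a_n\gb_n$ to $\sum a_n\gb_{\phi(n)}$ preserving norm, and then apply $M_{\tilde\epsilon}$ with $\tilde\epsilon_{\phi(n)}=\epsilon_n$ to insert the prescribed signs, again without changing the norm. The main obstacle throughout is keeping the enumeration/compression arguments honest so that the reindexed map stays in $\OO_f$ and the monotonicity of $\ww$ points the right way; the specific choice $\psi=\phi\circ\tau$ to recover the lower bound in (iii) is the key move that makes everything click.
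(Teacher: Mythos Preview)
Your proof is correct and follows essentially the same line as the paper's. The one difference worth flagging is in (iii): the paper derives the lower bound from the left-inverse relation between the two operators (since their composition is the identity, the norm-one bound on one forces the other to be norm-preserving), whereas you produce explicit witnesses $\psi=\phi\circ\tau$; both routes are equally short. You have also correctly identified the \emph{spread} operator $U_\phi$ as the isometric embedding---the statement of (iii) as printed names $V_\phi$, but $V_\phi$ kills every coordinate outside $\rang(\phi)$ and hence is not injective for nontrivial $\phi$, so this (together with the companion formulas in the preceding remark) is a typographical swap of $U$ and $V$ that your description-by-action neatly sidesteps.
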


\begin{proof} (i) is clear; 
(iii) is a consequence of (ii) and Remark~\ref{rmk:1};   (iv) is a consequence of  (i),  (iii), and Remark~\ref{rmk:1}. Thus we must only  care to show (ii).
Let $f=(a_n)_{n=1}^\infty\in \FF^\NN$. It is obvious that  $\Vert M_\epsilon(f)\Vert_g=\Vert f\Vert_g$
and that $\Vert P_A(f)\Vert_g \le \Vert f\Vert_g$. Since $\phi\circ\psi\in\OO$ for all $\psi\in\OO$,
\[
\Vert V_\phi(f)\Vert_g^p=\sup_{\psi\in\OO} \sum_{n=1}^\infty |a_{\phi(\psi(n))}|^pw_n \le \Vert f\Vert_g.
\]
Let $U_\phi(f)=(b_n)_{n=1}^\infty$. Pick $\psi\in\OO$.
The function $\phi^{-1}\circ\psi$ is an increasing map from a set $A\subseteq \NN$ to $\NN$.  Put $J=\{ n\in\NN \colon n\le |A|\}$ and choose
 $\gamma\colon J \to A$ increasing and bijective. Since $\rho:=\phi^{-1}\circ\psi\circ\gamma\in\OO\cup \OO_f$ and $n\le \gamma(n)$ for all $n\in J$,
\begin{align*}
 \sum_{n=1}^{\infty} |b_{\psi(n)}|^p w_n
&=\sum_{n\in A}  |a_{\phi^{-1}\circ\psi(n)}|^p w_n
=\sum_{n\in J}  |a_{\rho(n)}|^p w_{\gamma(n)}\\
&\le \sum_{n\in J}  |a_{\rho(n)}|^p w_{n}
\le  \Vert f\Vert^p_{g}.
\end{align*}
Taking the supremum on $\psi$ we get $\Vert U_\phi(f)\Vert_g \le \Vert f\Vert_g$.
\end{proof}

\begin{definition}
Let $(\xx_n)_{n=1}^\infty$ be a basis  for a Banach space $X$ and let $g=\sum_{n=1}^\infty b_n \xx_n$ be a vector in $X$. Given another $f=\sum_{n=1}^\infty a_n \xx_n \in X$ 
 we write $f\prec g$  (with respect to the basis $(\xx_n)_{n=1}^\infty$) if  
\begin{itemize}
\item  $\supp(f)\subseteq\supp(g)$, 
 \item $a_n=b_n$ for all $n\in\supp(f)$, and
 \item   $\Vert f\Vert=\Vert g\Vert$. 
\end{itemize}
We say that $g$ is  \textbf{minimal}  in $X$ (with respect to   $(\xx_n)_{n=1}^\infty$)  if it is minimal in $X$ equipped with the partial order $\prec$, i.e., if
 $f\prec g$  implies $f=g$. 
\end{definition}

\begin{remark}
 Note that if $g$ is finitely supported  with respect to a basis $(\xx_n)_{n=1}^\infty$  then there exists a minimal $f\prec g$.
 \end{remark}
\begin{remark}  Every  sequence in $d(\ww,p)$, $1\le p<\infty$,  is minimal. The situation is quite different for $g(\ww,p)$.  For instance,  if $(1-w_2) t^p \ge 1$, then the vector $\gb_1+t \gb_2$ is not minimal.
\end{remark}

\begin{lemma}\label{ConstantCoefficients}  Let $\ww=(w_n)_{n=1}^\infty\in\WW$ and $1\le p<\infty$. For any $A\subseteq\NN$ finite we have:
\begin{itemize}
\item[(i)] $\Vert\sum_{n\in A} \ee_n\Vert_{g}=\sum_{n=1}^{|A|} w_n$.
\item[(ii)] The vector $\sum_{n\in A} \ee_n$ is minimal in $g(\ww,p)$ (with respect to its canonical basis).
\end{itemize}
\end{lemma}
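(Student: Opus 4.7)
The plan for (i) is to apply the finite-support formula \eqref{SupGarling} directly to $f := \sum_{n \in A} \ee_n$. Since the coefficients of $f$ are the indicator of $A$, for any $\phi \in \OO_f$ the sum $\sum_{n=1}^{r(\phi)} |a_{\phi(n)}|^p w_n$ collapses to $\sum_{n \in \phi^{-1}(A)} w_n$, so computing $\Vert f\Vert_g$ reduces to maximizing the latter over $\phi \in \OO_f$. The natural candidate maximizer is the order-preserving bijection of $\{1, \ldots, |A|\}$ onto $A$, which realizes the value $\sum_{k=1}^{|A|} w_k$.

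To check optimality, I would take an arbitrary $\phi \in \OO_f$ and write $\phi^{-1}(A) = \{n_1 < \cdots < n_m\}$ with $m \le |A|$. The elementary bound $n_k \ge k$ together with the monotonicity of $\ww$ gives $w_{n_k} \le w_k$, hence
\[
\sum_{n \in \phi^{-1}(A)} w_n \;=\; \sum_{k=1}^{m} w_{n_k} \;\le\; \sum_{k=1}^{m} w_k \;\le\; \sum_{k=1}^{|A|} w_k.
\]
Combined with the lower bound coming from the bijective $\phi$, this establishes (i).

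For (ii), the argument is short: suppose $f \prec g := \sum_{n \in A} \ee_n$. The support inclusion $\supp(f) \subseteq A$ together with the coefficient-matching clause of $\prec$ forces $f = \sum_{n \in B} \ee_n$ for some $B \subseteq A$. The equal-norm clause, combined with (i), yields $\sum_{k=1}^{|B|} w_k = \sum_{k=1}^{|A|} w_k$, and the strict positivity of every $w_k$ (built into the definition of $\WW$) forces $|B| = |A|$, hence $B = A$ and $f = g$. No step here is substantial; the only mildly nontrivial observation is the pairing of $n_k \ge k$ with the monotonicity of $\ww$ used in (i).
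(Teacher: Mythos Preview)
Your proof is correct. Part~(ii) is verbatim the paper's argument. For part~(i) the paper obtains the upper bound by invoking Proposition~\ref{embeddings} (the norm-one inclusion $d(\ww,p)\subseteq g(\ww,p)$ immediately gives $\Vert f\Vert_g\le\Vert f\Vert_d=\bigl(\sum_{k=1}^{|A|}w_k\bigr)^{1/p}$), whereas you argue directly from \eqref{SupGarling} via the observation $n_k\ge k$ and monotonicity of $\ww$. Your route is slightly more self-contained; the paper's is shorter because the embedding has already been established. Either way the content is the same elementary bound.
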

\begin{proof}
Part~(i)  can be obtained, for example, from Proposition~\ref{embeddings}.
To prove (ii), let  $f\prec \sum_{n\in A} \ee_n$. Then $f=\sum_{n\in B} \ee_n$ for some $B\subseteq A$, and  
\[\sum_{n=1}^{|B|} w_n=\Vert f\Vert_g=\left\Vert\sum_{n\in A} \ee_n\right\Vert_{g}=\sum_{n=1}^{|A|} w_n.\]
 Consequently $|B|=|A|$ and so $B=A$.
\end{proof}

\begin{lemma}\label{NormAttaining}  Let $\ww\in\WW$ and $1\le p<\infty$. Suppose
$f\in c_{00}$ is minimal in $g(\ww,p)$ with respect to the canonical basis. Then 
\begin{equation}\label{attaining-function}
\Vert f\Vert_g=\left(\sum_{n=1}^{r(\psi)} |a_{\psi(n)}|^pw_n\right)^{1/p},
\end{equation}
where $\psi\in\OO_f$  is determined by  $\rang(\psi)=\supp f$.
\end{lemma}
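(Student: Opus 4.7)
My plan is to first reduce the supremum in \eqref{SupGarling} to a finite maximization, and then use minimality to pin down the unique maximizer as $\psi$. Write $A := \supp(f)$, which is finite by hypothesis, so that $\psi$ is the increasing enumeration of $A$.

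The first step is a compression argument. Given an arbitrary $\phi \in \OO_f$, let $k_1 < \cdots < k_m$ enumerate those positions in the domain of $\phi$ with $\phi(k_j) \in A$, and let $\phi' \in \OO_f$ be the increasing enumeration of $\{\phi(k_1),\dots,\phi(k_m)\}$; thus $\phi'(j) = \phi(k_j)$ and $k_j \ge j$. Since $a_{\phi(n)} = 0$ whenever $\phi(n) \notin A$, and since $w_{k_j} \le w_j$ because $\ww$ is nonincreasing, we obtain
\[
\sum_{n=1}^{r(\phi)} |a_{\phi(n)}|^p w_n \,=\, \sum_{j=1}^{m} |a_{\phi(k_j)}|^p w_{k_j} \,\le\, \sum_{j=1}^{m} |a_{\phi'(j)}|^p w_j \,=\, \sum_{n=1}^{r(\phi')} |a_{\phi'(n)}|^p w_n.
\]
Hence the sup in \eqref{SupGarling} may be taken over $\{\phi \in \OO_f : \rang(\phi) \subseteq A\}$, which is a finite set. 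Fix a maximizer $\phi^*$ and set $B := \rang(\phi^*) \subseteq A$.

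The second step invokes minimality to force $B = A$. Consider $g := P_B(f)$. By Proposition~\ref{subsymmetry}(i), $\Vert g\Vert_g \le \Vert f\Vert_g$. On the other hand, $\phi^*$ is precisely the increasing enumeration of $B = \supp(g)$, so testing the Garling norm of $g$ against $\phi^*$ gives
\[
\Vert g\Vert_g^p \,\ge\, \sum_{n=1}^{|B|} |a_{\phi^*(n)}|^p w_n \,=\, \Vert f\Vert_g^p,
\]
and thus $\Vert g\Vert_g = \Vert f\Vert_g$. Combined with $\supp(g) = B \subseteq A$ and the fact that $g$ and $f$ agree on $B$, this yields $g \prec f$; minimality of $f$ forces $g = f$, i.e., $B = A$. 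Since an element of $\OO_f$ is determined by its range, $\phi^* = \psi$, and \eqref{attaining-function} follows.

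The main obstacle I anticipate is the compression argument in the first step; it is essentially a bookkeeping exercise driven by monotonicity of $\ww$, but it is the load-bearing ingredient. Once the sup is known to be attained at some $\phi^*$ with $\rang(\phi^*) \subseteq A$, the minimality hypothesis activates cleanly through the single coordinate projection $P_{\rang(\phi^*)}$, with no further work needed.
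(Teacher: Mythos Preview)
Your proof is correct and follows essentially the same approach as the paper's: a compression step showing the supremum in \eqref{SupGarling} may be restricted to the finite set $\{\phi\in\OO_f:\rang(\phi)\subseteq\supp f\}$, followed by applying minimality to the coordinate projection onto the range of a maximizer. The only differences are notational (you index via positions $k_j$, the paper via the inverse map $\gamma$), and you make the appeal to $\Vert P_B(f)\Vert_g\le\Vert f\Vert_g$ explicit through Proposition~\ref{subsymmetry}(i).
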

\begin{proof}
Given $\phi\in\OO_f$, let $\psi$ be the map in $\OO_f$ determined by $\rang(\psi)=\rang(\phi)\cap\supp f$. 
Let $\gamma$ by the inverse function of $\phi$ restricted to $\rang(\psi)$.
We have
\[
\sum_{n=1}^{r(\phi)} |a_{\phi(n)}|^p w_n
=\sum_{n\in \rang(\psi)} |a_n|^p w_{\gamma(n)}\le
 \sum_{n\in \rang(\psi)} |a_n|^p w_{\psi^{-1}(n)}=\sum_{n=1}^{r(\psi)} |a_{\psi(n)}|^pw_n.
\]
Therefore in order to compute  the supremum in \eqref{SupGarling} we can restrict ourselves to $\{\phi\in\OO_f\colon \rang(\phi)\subseteq\supp f\}$, so that this supremum is attained.
Let $\psi$ be the element in $\OO_f$ with $\rang(\psi)\subseteq \supp(f)$ where the supremum  in \eqref{SupGarling}  is attained. 
Let  $h$ be the projection of $f$ onto $\rang(\psi)$. Then
\[
 \Vert f \Vert_g^p=\sum_{n=1}^{r(\psi)} |a_{\psi(n)}|^pw_n\le \Vert h\Vert^p_g,
 \]
 hence $h=f$ and $\rang(\psi)=\supp f$. \end{proof}

Recall that a \textbf{block basic sequence} of a basic sequence $(\xx_n)_{n=1}^\infty$ in a Banach space $X$ is a sequence  $(\yy_n)_{n=1}^\infty$ of non-zero vectors of the form
\[
\yy_n=\sum_{i=p_n}^{p_{n+1}-1} a_i \xx_i
\]
for some (unique) $(a_n)_{n=1}^\infty\in\FF^\NN$ and some  increasing sequence of integers $(p_n)_{n=1}^\infty$ with $p_1=1$.

\begin{definition} Let $\yy_n=\sum_{i=p_n}^{p_{n+1}-1} a_i \xx_i$, $ n\in \NN,$ be a block basic sequence of a basic sequence $(\xx_n)_{n=1}^\infty$ in a Banach space $X$. Then:
\begin{enumerate}
\item[(a)]  $(\yy_{n})_{n=1}^{\infty}$ is said to be  \textbf{uniformly null} if $\lim_n a_n=0$.
  \item[(b)]   $(\yy_{n})_{n=1}^{\infty}$ is said to verify the \textbf{gliding hump property} if  
 \begin{equation*}
 \inf_{n\in\NN}\sup_{p_n\leq i\leq p_{n+1}-1}|a_i|>0.
 \end{equation*}
 \end{enumerate}
 \end{definition}
 
  \begin{remark}\label{dichotomy-block} A subsequence of a block basic sequence  also is  a block basic sequence. Moreover, we have the following dichotomy: 
 a block basic sequence is either  uniformly null or has a subsequence verifying the gliding hump property.
 \end{remark}

 \begin{lemma}\label{largeblocks} Let  $(\xx_n)_{n=1}^\infty$ be a semi-normalized basis in a Banach space $X$. If  $(\yy_n)_{n=1}^\infty$ is a semi-normalized uniformly null block basic sequence of $(\xx_n)_{n=1}^\infty$ then
 $\lim_n |\supp(\yy_n)|=\infty$. 
 \end{lemma}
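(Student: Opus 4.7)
The plan is to proceed by contradiction, extracting from a hypothetical counterexample a subsequence of $(\yy_n)_{n=1}^\infty$ whose norms must actually tend to zero, thereby violating the semi-normalization hypothesis. The key observation is that for a block basic sequence the supports are disjoint and pushed to infinity along the index $n$, so the nonzero coefficients appearing in $\yy_n$ are drawn from a tail of the coefficient sequence $(a_i)_{i=1}^\infty$.

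Concretely, suppose that $|\supp(\yy_n)|$ does not tend to infinity. Then one can find $M\in\NN$ and a subsequence $(\yy_{n_k})_{k=1}^\infty$ with $|\supp(\yy_{n_k})|\le M$ for every $k$. Writing $S_k:=\supp(\yy_{n_k})\subseteq[p_{n_k},p_{n_k+1}-1]$, we have $\min S_k\ge p_{n_k}\to\infty$, so by uniform nullness
\[
\alpha_k:=\max_{i\in S_k}|a_i|\le \sup_{i\ge p_{n_k}}|a_i|\xrightarrow[k\to\infty]{} 0.
\]
The semi-normalization of $(\xx_n)_{n=1}^\infty$ provides a constant $C$ with $\|\xx_i\|\le C$ for all $i$. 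Then a straightforward application of the triangle inequality yields
\[
\|\yy_{n_k}\|=\Bigl\|\sum_{i\in S_k} a_i \xx_i\Bigr\|\le\sum_{i\in S_k}|a_i|\,\|\xx_i\|\le CM\alpha_k\xrightarrow[k\to\infty]{} 0,
\]
which contradicts $\inf_n\|\yy_n\|>0$.

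There is no real obstacle here: the lemma is essentially a bookkeeping observation. The only point that needs any care is checking that $\min S_k\to\infty$, which follows because $\supp(\yy_{n_k})$ lies in the block interval $[p_{n_k},p_{n_k+1}-1]$ and the $p_n$ are strictly increasing. Once that is in hand, the triangle inequality combined with the uniform upper bound on $\|\xx_i\|$ and the pointwise decay of the coefficients forces $\|\yy_{n_k}\|\to 0$, completing the argument.
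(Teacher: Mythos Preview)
Your proof is correct and follows essentially the same approach as the paper: assume the support sizes stay bounded along a subsequence, then use the triangle inequality together with $\sup_i\|\xx_i\|<\infty$ and the decay of the coefficients to force $\|\yy_{n_k}\|\to 0$, contradicting semi-normalization. The only cosmetic difference is that the paper phrases the contradiction as $B\le CDC_n\to 0$ along an infinite set $A$, whereas you explicitly extract a subsequence and track $\alpha_k\to 0$; the substance is identical.
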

 \begin{proof} Assume the claim fails. Then there is an infinite subset $A\subseteq\NN$ with $D=\sup_{n\in A}|\supp(\yy_n)|<\infty$.
  Write $\yy_n=\sum_{i=p_n}^{p_{n+1}-1} a_i \xx_i$ and put 
  \[C_n=\max_{p_n\le i \le p_{n+1}-1}|a_i|,\quad
C=\sup_n \Vert \xx_n\Vert,\quad \text{and}\quad B=\inf_n \Vert y_n\Vert.\]
 We have $B \le CDC_n$ for all $n\in A$. Letting $n$ tend to infinity trough $A$
we obtain $B\le 0$, an absurdity.
 \end{proof}

We shall also need two properties on block basic sequences of subsymmetric bases that we gather in the following proposition. 

\begin{proposition}\label{subsymmetric-dominates}  Let $X$ be a Banach space with a subsymmetric basis $(\xx_n)_{n=1}^\infty$ and  suppose $(\yy_n)_{n=1}^\infty$ is a semi-normalized block basic sequence  of $(\xx_n)_{n=1}^\infty$.
\begin{enumerate}
 \item[(i)]  If $\sup_n|\supp (\yy_n)|<\infty$  then $(\yy_n)_{n=1}^\infty$ is equivalent to $(\xx_n)_{n=1}^\infty$.
\item[(ii)] If $(\yy_n)_{n=1}^\infty$ verifies the gliding hump property 
then $(\xx_n)_{n=1}^\infty\lesssim (\yy_n)_{n=1}^\infty$.
\end{enumerate}
\end{proposition}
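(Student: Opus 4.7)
The plan is to reduce both parts to a common three-step template: (a) project $\sum_n b_n\yy_n$ onto a single carefully chosen coordinate from each block using unconditionality of $(\xx_n)_{n=1}^\infty$; (b) clear the resulting scalars, again by unconditionality, exploiting that those coefficients are bounded both above and below; and (c) invoke subsymmetry to pass from the selected strictly increasing index sequence back to $(\xx_n)_{n=1}^\infty$. Throughout, write $\yy_n=\sum_{i=p_n}^{p_{n+1}-1}a_i\xx_i$ and let $C$ denote the subsymmetric constant of $(\xx_n)_{n=1}^\infty$; then $(\xx_n)_{n=1}^\infty$ is $C$-unconditional, coordinate projections along it have norm at most $C$, and the semi-normalization $\Vert\xx_n\Vert\approx 1\approx\Vert\yy_n\Vert$ forces each $|a_i|$ to lie in a bounded interval $[0,\Lambda]$ for some universal $\Lambda$.

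For (ii), the gliding hump hypothesis produces $\delta>0$ and, for each $n$, an index $i_n\in[p_n,p_{n+1})$ with $|a_{i_n}|\geq\delta$; since the blocks are disjoint and increasing, $(i_n)_{n=1}^\infty$ is strictly increasing. Given $(b_n)\in c_{00}$, the coordinate projection (of norm at most $C$) onto $\{i_n:n\in\NN\}$ yields $\Vert\sum_n b_n\yy_n\Vert\geq C^{-1}\Vert\sum_n b_n a_{i_n}\xx_{i_n}\Vert$; a second application of unconditionality (using $\delta\leq|a_{i_n}|\leq\Lambda$) gives $\Vert\sum_n b_n a_{i_n}\xx_{i_n}\Vert\geq(\delta/C)\Vert\sum_n b_n\xx_{i_n}\Vert$; and $C$-subsymmetry yields $\Vert\sum_n b_n\xx_{i_n}\Vert\geq C^{-1}\Vert\sum_n b_n\xx_n\Vert$. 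Chaining these three inequalities proves $(\xx_n)_{n=1}^\infty\lesssim(\yy_n)_{n=1}^\infty$.

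For (i), set $D=\sup_n|\supp(\yy_n)|<\infty$, enumerate $\supp(\yy_n)=\{q_{n,1}<\cdots<q_{n,k_n}\}$, and let $A_j=\{n\in\NN:k_n\geq j\}$ for $1\leq j\leq D$; for each fixed $j$ the sequence $(q_{n,j})_{n\in A_j}$ is strictly increasing. The lower bound $(\xx_n)_{n=1}^\infty\lesssim(\yy_n)_{n=1}^\infty$ follows by repeating the argument of (ii) with $i_n:=q_{n,j(n)}$, where $j(n)\in\{1,\ldots,k_n\}$ maximizes $|a_{q_{n,j}}|$: expanding $\yy_n=\sum_j a_{q_{n,j}}\xx_{q_{n,j}}$ and applying unconditionality together with $\Vert\xx_n\Vert\leq\sup_m\Vert\xx_m\Vert$ forces $|a_{q_{n,j(n)}}|\gtrsim 1$ uniformly in $n$, while the inclusion $q_{n,j(n)}<p_{n+1}\leq q_{n+1,j(n+1)}$ ensures that $(q_{n,j(n)})_n$ is strictly increasing. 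For the upper bound, I split
\[
\sum_n b_n\yy_n=\sum_{j=1}^D\sum_{n\in A_j}b_n a_{q_{n,j}}\xx_{q_{n,j}},
\]
apply the triangle inequality over $j$, and reduce each inner sum to a constant multiple of $\Vert\sum_n b_n\xx_n\Vert$.

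The step requiring genuine care is this final reduction, bounding $\Vert\sum_{n\in A_j}b_n\xx_{q_{n,j}}\Vert$ by $\Vert\sum_n b_n\xx_n\Vert$ when $A_j$ may be a proper subset of $\NN$. Enumerating $A_j=\{m_1<m_2<\cdots\}$, $C$-subsymmetry first yields $\Vert\sum_k b_{m_k}\xx_{q_{m_k,j}}\Vert\leq C\Vert\sum_k b_{m_k}\xx_k\Vert$ (moving the strictly increasing positions $q_{m_k,j}$ down to $k$); a second application of subsymmetry gives $\Vert\sum_k b_{m_k}\xx_k\Vert\leq C\Vert\sum_k b_{m_k}\xx_{m_k}\Vert$; and finally, since $\sum_k b_{m_k}\xx_{m_k}$ is the coordinate projection of $\sum_n b_n\xx_n$ onto $A_j$, unconditionality bounds it by $C\Vert\sum_n b_n\xx_n\Vert$. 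Summing the resulting $D$ inequalities completes the proof of (i).
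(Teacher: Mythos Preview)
Your argument is correct. The paper itself does not supply a proof of this proposition; it simply records that the proofs ``follow the steps of \cite{ACL73}*{Proposition 3} and \cite{ACL73}*{Proposition 4}'' and are left to the reader. What you have written is precisely the standard argument one expects (and, up to cosmetic differences, the one in \cite{ACL73}): for (ii), project each block onto a single coordinate where the coefficient is bounded below, clear that coefficient via unconditionality, and shift indices via subsymmetry; for (i), derive the lower bound from (ii) by selecting the largest coefficient in each block (which is bounded below because $|\supp(\yy_n)|\le D$ and $\Vert\yy_n\Vert\gtrsim 1$), and obtain the upper bound by slicing the blocks into at most $D$ layers and handling each layer with subsymmetry plus a coordinate projection onto $A_j$.

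Two minor remarks. First, the constants you quote are slightly loose: $C$-subsymmetry gives $C$-unconditionality, and hence diagonal multipliers $(\lambda_n)$ with $|\lambda_n|\le 1$ act with norm at most $C$ in the real case and at most $2C$ in the complex case; your inequalities remain valid with these adjusted constants. Second, in the upper bound for (i) you silently absorb the factors $a_{q_{n,j}}$ before passing to the ``step requiring genuine care''; it may be worth stating explicitly that $|a_{q_{n,j}}|\le\Lambda$ together with unconditionality gives $\Vert\sum_{n\in A_j}b_n a_{q_{n,j}}\xx_{q_{n,j}}\Vert\lesssim\Vert\sum_{n\in A_j}b_n\xx_{q_{n,j}}\Vert$. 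Neither point affects the validity of the proof.
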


\begin{proof}
The proofs  follow the steps of  \cite{ACL73}*{Proposition 3} and \cite{ACL73}*{Proposition 4}, respectively, and so we leave them out for the reader.
\end{proof}

To make this paper as self-contained as possible we  record the following well known fact.
\begin{proposition}[cf. \cite{ACL73}*{Proposition 1}]\label{subsymmetric-weakly-null} Every subsymmetric basic sequence in a Banach space is either weakly null or else  equivalent to the canonical basis of $\ell_1$.\end{proposition}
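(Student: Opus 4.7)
The plan is to argue the contrapositive: if the subsymmetric basic sequence $(\xx_n)_{n=1}^\infty$ fails to be weakly null, I will extract from it a sequence equivalent to the $\ell_1$ basis, and then use subsymmetry to conclude that $(\xx_n)_{n=1}^\infty$ itself is $\ell_1$-equivalent.

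First I would exploit the failure of weak nullity to find $x^*\in X^*$, a subsequence $(\xx_{n_k})_{k=1}^\infty$, and a constant $\delta>0$ so that $|x^*(\xx_{n_k})|\geq\delta$ for every $k$. Since subsymmetry implies in particular that every subsequence of $(\xx_n)_{n=1}^\infty$ is equivalent to the whole sequence with a uniform constant, proving that $(\xx_{n_k})_{k=1}^\infty$ is equivalent to the $\ell_1$-basis is enough. Let $K\geq 1$ be the unconditionality constant of $(\xx_n)_{n=1}^\infty$. Choose signs $\epsilon_k$ with $\epsilon_k x^*(\xx_{n_k})=|x^*(\xx_{n_k})|\geq \delta$; unconditionality gives $(\xx_{n_k})_{k=1}^\infty\approx_K (\epsilon_k\xx_{n_k})_{k=1}^\infty$, so I may as well assume outright that $x^*(\xx_n)\geq \delta$ for every $n$.

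Next, for arbitrary scalars $a_1,\ldots,a_N$ pick unimodular signs $\eta_n$ with $\eta_n a_n=|a_n|$. Unconditionality yields the lower estimate
\[
\left\Vert \sum_{n=1}^N a_n\xx_n\right\Vert \;\geq\; \frac{1}{K}\left\Vert \sum_{n=1}^N |a_n|\xx_n\right\Vert,
\]
and then testing against $x^*$ gives
\[
\left\Vert \sum_{n=1}^N |a_n|\xx_n\right\Vert \;\geq\; \frac{1}{\Vert x^*\Vert}\sum_{n=1}^N |a_n|\,x^*(\xx_n)\;\geq\; \frac{\delta}{\Vert x^*\Vert}\sum_{n=1}^N |a_n|.
\]
This establishes the $\ell_1$ lower bound. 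For the upper bound, recall from the discussion preceding the proposition that any subsymmetric basis is semi-normalized, so $M:=\sup_n\Vert\xx_n\Vert<\infty$, which immediately yields the trivial majorization $\Vert\sum a_n\xx_n\Vert\leq M\sum |a_n|$. Combining the two bounds shows that $(\xx_n)_{n=1}^\infty$ is equivalent to the canonical basis of $\ell_1$.

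There is no real obstacle here: the only delicate point is remembering that one must adjust signs twice—first to make the functional values positive along the selected subsequence, and then again (within each fixed linear combination) to align the complex coefficients with the functional—both of which are legitimate because subsymmetry incorporates unconditionality with a uniform constant.
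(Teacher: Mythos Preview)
Your argument is correct and is essentially the standard proof of this fact. Note that the paper does not actually supply a proof of this proposition: it is quoted as a known result with a reference to \cite{ACL73}*{Proposition~1}, so there is no ``paper's own proof'' to compare against beyond that citation. Your write-up is a faithful rendering of the classical argument underlying that reference.
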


\begin{corollary}\label{garling-weakly-null}
Let $(\xx_n)_{n=1}^\infty$ be a basic sequence in a Banach space $X$. If $(\xx_n)_{n=1}^\infty$ is  equivalent  to the unit vector basis  of $g(\ww,p)$ for $\ww\in\WW$ and $1\le p<\infty$,  then it is weakly null.
\end{corollary}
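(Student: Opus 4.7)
The plan is to combine Proposition~\ref{subsymmetric-weakly-null} with a growth estimate on $\Vert \sum_{n=1}^N \gb_n\Vert_g$. Since the canonical basis $(\gb_n)_{n=1}^\infty$ of $g(\ww,p)$ is $1$-subsymmetric by Proposition~\ref{subsymmetry}(iv) and subsymmetry is preserved under equivalence, $(\xx_n)_{n=1}^\infty$ is a subsymmetric basic sequence in $X$. Proposition~\ref{subsymmetric-weakly-null} then reduces the problem to ruling out that $(\xx_n)_{n=1}^\infty$ is equivalent to the unit vector basis of $\ell_1$.

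Suppose for contradiction that it were. Then the same would hold for $(\gb_n)_{n=1}^\infty$ in $g(\ww,p)$, and in particular there would exist $c>0$ with
\[
\left\Vert \sum_{n=1}^N \gb_n\right\Vert_{g} \ge c N, \qquad N\in\NN.
\]
By Lemma~\ref{ConstantCoefficients}(i), however, this norm equals $\bigl(\sum_{n=1}^N w_n\bigr)^{1/p}$. I would then exploit the assumption $\ww\in c_0$: since $w_n\to 0$, a standard Ces\`aro argument yields $N^{-1}\sum_{n=1}^N w_n\to 0$, so $\bigl(\sum_{n=1}^N w_n\bigr)^{1/p} = o(N^{1/p}) = o(N)$ because $p\ge 1$. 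This contradicts the lower bound $cN$.

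Hence the $\ell_1$ alternative in Proposition~\ref{subsymmetric-weakly-null} is impossible, and $(\xx_n)_{n=1}^\infty$ must be weakly null. I do not anticipate any genuine obstacle here; the only point worth noting is that weak nullity is inherited from weak nullity of $(\gb_n)_{n=1}^\infty$ in $g(\ww,p)$ via the isomorphism between $[\gb_n]_{n=1}^\infty$ and $[\xx_n]_{n=1}^\infty$, together with the fact that weak nullity in a closed subspace passes to the ambient space.
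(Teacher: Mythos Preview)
Your proof is correct and follows the same skeleton as the paper's: invoke the subsymmetry of $(\gb_n)$ from Proposition~\ref{subsymmetry}, apply the dichotomy of Proposition~\ref{subsymmetric-weakly-null}, and rule out the $\ell_1$ alternative. The only difference lies in this last step. The paper cites Proposition~\ref{embeddings}: since $(\gb_n)\lesssim_1(\ff_n)$ and the inclusion $\ell_p\subsetneq g(\ww,p)$ is strict, $(\gb_n)$ cannot be equivalent to the canonical $\ell_1$-basis. You instead compute $\bigl\Vert\sum_{n=1}^N\gb_n\bigr\Vert_g=(\sum_{n=1}^N w_n)^{1/p}$ via Lemma~\ref{ConstantCoefficients} and use the Ces\`aro argument from $\ww\in c_0$ to see this is $o(N)$. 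Both routes are valid; yours is arguably more transparent, and in fact the computation you carry out is essentially what lies behind the strictness of $\ell_p\subsetneq d(\ww,p)$ in the first place. Your closing remark about transferring weak nullity through the isomorphism is unnecessary, since Proposition~\ref{subsymmetric-weakly-null} is already being applied directly to $(\xx_n)$ in $X$.
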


\begin{proof} It is straightforward from Proposition~\ref{embeddings}, Proposition~\ref{subsymmetry}, and Proposition~\ref{subsymmetric-weakly-null}.
\end{proof}

 We put an end to this preliminary section recalling a precise version of Bessaga-Pelczy\'nski Selection Principle that we will need below.
 \begin{theorem}[cf. \cite{AK2016}*{Proposition 1.3.10}]\label{BPSP} Let $X$ be a Banach space with a basis $(\xx_n)_{n=1}^\infty$. Let $(\yy_n)_{n=1}^\infty$ be a 
 normalized weakly null sequence in $X$ and let $\epsilon>0$. Then there exists a subsequence  
 $(\yy_{n_k})_{n=1}^\infty$   of $(\yy_n)_{n=1}^\infty$ that is $(1+\epsilon)$-equivalent  to a normalized block basic sequence $(\zz_k)_{k=1}^\infty$ of $(\xx_n)_{n=1}^\infty$.
 Moreover, if $T\colon[\zz_k]_{k=1}^\infty \to  [\yy_{n_k}]_{n=1}^\infty$ is the isomorphism given by $T(\zz_k)=\yy_{n_k}$, we have that whenever $Z\subseteq [\zz_k]_{k=1}^\infty$ is 
 $C$-complemented in $X$ then $T(Z)$ is $C(1+\epsilon)$-complemented in $X$.
 \end{theorem}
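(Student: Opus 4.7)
The plan is to establish both assertions together by first running the classical Bessaga-Pełczyński construction and then upgrading the resulting equivalence to a global small-perturbation automorphism of $X$. Fix $\eta\in(0,1)$ with $(1+\eta)/(1-\eta)\le 1+\epsilon$, let $K$ denote the basis constant of $(\xx_n)_{n=1}^\infty$, and choose a positive summable sequence $(\delta_k)_{k=1}^\infty$ with $\sum_k 4K\delta_k<\eta$.

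For the first assertion I would combine two standard ingredients: (i) since $\yy_n\to 0$ weakly and the basis coordinate functionals are continuous, for every fixed $m$ the partial-sum projection $S_m$ (being of finite rank) satisfies $S_m\yy_n\to 0$ in norm as $n\to\infty$; and (ii) each $\yy_{n_k}$ is the norm-limit of its own partial sums. A routine back-and-forth induction then produces indices $n_1<n_2<\cdots$ and integers $0=p_0<p_1<p_2<\cdots$ such that the block $\zz_k':=(S_{p_k}-S_{p_{k-1}})\yy_{n_k}$ satisfies $\|\yy_{n_k}-\zz_k'\|<\delta_k$. Setting $\zz_k:=\zz_k'/\|\zz_k'\|$ yields a normalized block basic sequence of $(\xx_n)_{n=1}^\infty$ with $\|\yy_{n_k}-\zz_k\|\le 2\delta_k$ (since $\bigl|\|\zz_k'\|-1\bigr|<\delta_k$).

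To build the global isomorphism, let $F_k:=\supp\zz_k$ and let $\pi_k:=S_{p_k}-S_{p_{k-1}}\colon X\to \langle \xx_i\rangle_{i\in F_k}$, of norm at most $2K$. Since $\zz_k$ is a normalized vector of the finite-dimensional space $\rang(\pi_k)$, Hahn-Banach yields a norm-one functional $\psi_k$ on $\rang(\pi_k)$ with $\psi_k(\zz_k)=1$. Put $\xi_k:=\psi_k\circ\pi_k\in X^*$, so that $\|\xi_k\|\le 2K$ and, by disjointness of the $F_k$'s, $\xi_k(\zz_j)=\delta_{jk}$. Define
\[
U(x)=x+\sum_{k=1}^\infty \xi_k(x)(\yy_{n_k}-\zz_k),\qquad x\in X.
\]
The pointwise estimate $\sum_k|\xi_k(x)|\,\|\yy_{n_k}-\zz_k\|\le\|x\|\sum_k 4K\delta_k<\eta\|x\|$ guarantees norm-convergence and delivers a bounded operator with $\|U-\id_X\|\le\eta$; hence $U$ is invertible on $X$ with $\|U\|\le 1+\eta$ and $\|U^{-1}\|\le(1-\eta)^{-1}$ via the Neumann series. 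Biorthogonality forces $U\zz_k=\yy_{n_k}$ for every $k$, so $U$ extends $T$ to all of $X$; in particular $(\zz_k)_{k=1}^\infty$ and $(\yy_{n_k})_{k=1}^\infty$ are $((1+\eta)/(1-\eta))$-equivalent, hence $(1+\epsilon)$-equivalent, and $U(Z)=T(Z)$ for every $Z\subseteq[\zz_k]_{k=1}^\infty$.

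If now $P\colon X\to X$ is a projection onto $Z$ with $\|P\|\le C$, then $Q:=UPU^{-1}$ is a projection onto $U(Z)=T(Z)$ with
\[
\|Q\|\le\|U\|\,\|P\|\,\|U^{-1}\|\le C\,\frac{1+\eta}{1-\eta}\le C(1+\epsilon).
\]
The main technical point is the construction of the global perturbation $U$—in particular the production of the functionals $\xi_k$ extending the biorthogonal system of $(\zz_k)_{k=1}^\infty$ to $X$ with uniformly controlled norm—and the simultaneous calibration of $(\delta_k)$ so that both the $(1+\epsilon)$-equivalence of $(\zz_k)$ with $(\yy_{n_k})$ and the bound $\|U-\id_X\|\le\eta$ hold; both demands reduce to the single summability condition $\sum_k\|\xi_k\|\delta_k<\eta$ already imposed at the outset.
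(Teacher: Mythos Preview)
The paper does not supply a proof of this theorem; it simply records the statement with a pointer to \cite{AK2016}*{Proposition 1.3.10}. Your argument is correct and is precisely the standard one underlying that reference: the gliding-hump selection produces $(\zz_k)$ close to $(\yy_{n_k})$, and the principle of small perturbations---your operator $U=\id_X+\sum_k\xi_k(\,\cdot\,)(\yy_{n_k}-\zz_k)$ with $\Vert U-\id_X\Vert<\eta$---simultaneously delivers the $(1+\epsilon)$-equivalence and, via $Q=UPU^{-1}$, the transfer of $C$-complementation to $C(1+\epsilon)$-complementation.
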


\section{Geometric properties of Garling sequence spaces}\label{sec2}

\noindent Our ultimate goal in this section is to establish the following set of structural results about the spaces $g(\ww,p)$.
\begin{theorem}\label{main} Let $1\leq p<\infty$ and $\ww=(w_n)_{n=1}^\infty\in\WW$. \begin{itemize}
\item[(i)] The unit vectors in $g(\ww, p)$ span  the entire space  and  form a boundedly complete basis.
\item[(ii)]  Every basic sequence in $g(\ww,p)$ equivalent to its canonical basis admits a subsequence spanning a subspace complemented in $g(\ww,p)$.
\item[(iii)]  If $1<p<\infty$ then $g(\ww,p)$ is reflexive, but if $p=1$ then $g(\ww,1)$ is nonreflexive.
\item[(iv)]  No subspace of $\ell_p$ is isomorphic to $g(\ww,p)$.
\item[(v)]  For every $\epsilon>0$ and every infinite-dimensional closed subspace $Y$ of $g(\ww,p)$ there exists a further subspace $Z\subseteq Y$ that is $(1+\epsilon)$-isomorphic to $\ell_p$
and $(1+\epsilon)$-complemented in $g(w,p)$.

\item[(vi)] The identity operator on $\ell_p$ factors through the inclusion  map 
$I_{d,g}\colon d(\ww,p) \to g(\ww,p)$.

\end{itemize}\end{theorem}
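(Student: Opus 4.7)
The plan is to exhibit bounded operators $A\colon\ell_p\to d(\ww,p)$ and $B\colon g(\ww,p)\to\ell_p$ with $B\circ I_{d,g}\circ A=\id_{\ell_p}$. Since $\ww\in c_0\setminus\ell_1$ forces $S_n:=\sum_{j=1}^n w_j\to\infty$, I would recursively pick $0=N_0<N_1<\cdots$ with $S_{N_k}\ge 2S_{N_{k-1}}$, and write $m_k:=N_k-N_{k-1}$ and $T_k:=\sum_{j=N_{k-1}+1}^{N_k} w_j=S_{N_k}-S_{N_{k-1}}$. The crucial consequence of the doubling is $T_k\ge\tfrac12 S_{N_k}\ge\tfrac12 S_{m_k}$, which is the single inequality that drives the whole argument.

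For $A$, I would take the normalized contiguous blocks $\uu_k:=S_{m_k}^{-1/p}\sum_{j=N_{k-1}+1}^{N_k}\ee_j$, which are norm-one both in $d(\ww,p)$ and, by Lemma~\ref{ConstantCoefficients}, in $g(\ww,p)$, and set $A(\ff_k):=\uu_k$. To check $\|A\|\le 1$, I would form the decreasing rearrangement of the coefficient sequence of $\sum_k a_k\uu_k$: after a permutation $\sigma$ sorting the heights $|a_k|/S_{m_k}^{1/p}$, this rearrangement consists of $m_{\sigma(k)}$ consecutive copies of each height, so monotonicity of $\ww$ bounds the contribution of each block by $(|a_{\sigma(k)}|^p/S_{m_{\sigma(k)}})\cdot S_{m_{\sigma(k)}}=|a_{\sigma(k)}|^p$, and summing gives $\|\sum_k a_k\uu_k\|_d\le\|(a_k)\|_p$. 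This step uses only the monotonicity of $\ww$, not the doubling.

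For $B$, I would introduce the functionals
\[
\psi_k\Big(\sum_j b_j\gb_j\Big):=\frac{S_{m_k}^{1/p}}{T_k}\sum_{j=N_{k-1}+1}^{N_k} w_j b_j,\qquad B(f):=(\psi_k(f))_{k=1}^\infty.
\]
Biorthogonality $\psi_k(\uu_j)=\delta_{jk}$ follows directly from $\sum_{j=N_{k-1}+1}^{N_k} w_j=T_k$, so $B\circ I_{d,g}\circ A=\id_{\ell_p}$ once $B$ is shown bounded. For boundedness, I would apply H\"older's inequality blockwise in the form $w_j b_j=w_j^{1/q}\cdot w_j^{1/p}b_j$ with $q=p/(p-1)$ (the case $p=1$ is the direct pointwise bound), which collapses cleanly to $|\psi_k(f)|^p\le (S_{m_k}/T_k)\sum_{j=N_{k-1}+1}^{N_k}w_j|b_j|^p\le 2\sum_{j=N_{k-1}+1}^{N_k}w_j|b_j|^p$. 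Summing on $k$ and invoking \eqref{SupGarling} with $\phi=\id_\NN\in\OO$ then yields $\|B(f)\|_p^p\le 2\sum_j w_j|b_j|^p\le 2\|f\|_g^p$.

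The main obstacle will be reconciling the twin demands on the weights inside $\psi_k$: they must pair naturally with the only cheap lower control on $\|f\|_g$ coming out of \eqref{SupGarling} (namely $\sum_j w_j|b_j|^p\le\|f\|_g^p$, obtained by taking $\phi=\id_\NN$), and they must simultaneously be renormalized so as to biorthogonalize against the unit-norm vectors $\uu_k$ without producing a constant that grows with $k$. These two demands force the appearance of the ratio $S_{m_k}/T_k$, and it is precisely the doubling choice of $N_k$ that bounds this ratio uniformly. Without that choice, even though $(\uu_k)$ remains $\ell_p$-equivalent on both sides, the resulting projection need not be bounded on $g(\ww,p)$.
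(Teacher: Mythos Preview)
Your argument for part~(vi) is correct. The doubling choice $S_{N_k}\ge 2S_{N_{k-1}}$ indeed gives $T_k\ge\tfrac12 S_{N_k}\ge\tfrac12 S_{m_k}$, and your H\"older computation and the bound $\sum_j w_j|b_j|^p\le\|f\|_g^p$ (from $\phi=\id_\NN$ in \eqref{SupGarling}) are clean; the boundedness of $A$ via the rearrangement argument is also fine. So $B\circ I_{d,g}\circ A=\id_{\ell_p}$ with $\|A\|\le 1$ and $\|B\|\le 2^{1/p}$.

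Your route, however, is genuinely different from the paper's. The paper proves (vi) by applying the heavy Theorem~\ref{subsequence-equivalent-lp} to the normalized uniformly null blocks $\yy_n=(\sum_{i=1}^{2^{n-1}}w_i)^{-1}\sum_{i=2^{n-1}}^{2^n-1}\gb_i$: that theorem produces, for any $\epsilon>0$, a subsequence $(1+\epsilon)$-equivalent to the $\ell_p$-basis and spanning a $(1+\epsilon)$-complemented subspace, and the factorization is read off from the resulting projection together with the known $\ell_p$-upper estimate for normalized blocks in $d(\ww,p)$. Your construction bypasses Theorem~\ref{subsequence-equivalent-lp} entirely: the doubling of $S_n$ and the explicit weighted averages $\psi_k$ give the projection directly, with a fixed constant $2^{1/p}$ rather than $1+\epsilon$. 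What you gain is a self-contained, elementary proof of (vi) that does not rely on any of the earlier structural results; what the paper's approach buys is constants arbitrarily close to $1$ and a demonstration that (vi) is really a corollary of the deeper $\ell_p$-saturation machinery already in place.
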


To that end, let us begin with the following Proposition.

\begin{proposition}\label{lp-dominates}Let $1\leq p<\infty$ and $\ww=(w_n)_{n=1}^\infty\in\WW$.  Every normalized block basic sequence of the canonical basis of $g(\ww, p)$ is 1-dominated by  the canonical basis of $\ell_p$.\end{proposition}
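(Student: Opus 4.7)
The plan is to prove the domination directly from the supremum definition \eqref{SupGarling}, by splitting any admissible $\phi\in\OO_f$ according to which block $\supp(\yy_n)$ its range hits, and then showing each piece contributes at most $|c_n|^p$.

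First I would fix a normalized block basic sequence $\yy_n=\sum_{i=p_n}^{p_{n+1}-1}a_i\gb_i$ with $\|\yy_n\|_g=1$, take arbitrary scalars $(c_n)$, and write $f=\sum_n c_n\yy_n=\sum_i d_i\gb_i$ where $d_i=c_n a_i$ on $[p_n,p_{n+1}-1]$. The norm of $f$ is a supremum over $\phi\in\OO_f$; since indices outside $\bigcup_n\supp(\yy_n)$ contribute zero, the usual observation that deleting indices from $\phi$ and reindexing only increases $\sum_k|d_{\phi(k)}|^p w_k$ (because $w_j\le w_{i_j}$ when $j\le i_j$) lets me assume $\rang(\phi)\subseteq\bigcup_n\supp(\yy_n)$.

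Next, because $\phi$ is increasing and the blocks $\supp(\yy_n)$ are disjoint intervals, the preimages $A_n(\phi):=\phi^{-1}(\supp(\yy_n))$ are consecutive integer intervals that partition the domain of $\phi$. Writing $A_n(\phi)=\{s_n,\dots,s_n+t_n-1\}$ and $\psi_n(j):=\phi(s_n+j-1)\in\supp(\yy_n)$, I get $\psi_n\in\OO_f$ with range in $\supp(\yy_n)$. The key step is then the estimate
\[
\sum_{k\in A_n(\phi)}|d_{\phi(k)}|^p w_k=|c_n|^p\sum_{j=1}^{t_n}|a_{\psi_n(j)}|^p w_{s_n+j-1}\le |c_n|^p\sum_{j=1}^{t_n}|a_{\psi_n(j)}|^p w_j\le |c_n|^p\|\yy_n\|_g^p=|c_n|^p,
\]
where the crucial inequality $w_{s_n+j-1}\le w_j$ uses the monotonicity of $\ww$ and the trivial bound $s_n\ge 1$, and the last inequality applies the definition of $\|\cdot\|_g$ to $\yy_n$ with the test function $\psi_n$.

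Summing over $n$ and then taking the supremum over $\phi$ gives $\|f\|_g^p\le\sum_n|c_n|^p$, which is exactly $1$-domination by the unit vector basis of $\ell_p$. There is no real obstacle here: the only subtlety is the bookkeeping that ensures $\phi$ may be broken into block-by-block pieces whose weights $w_{s_n+j-1}$ are dominated by the fresh weights $w_j$ used to bound each $\|\yy_n\|_g$; once this index-shift inequality is in hand the rest is immediate.
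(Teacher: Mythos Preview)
Your proof is correct and follows essentially the same route as the paper: both split the admissible $\phi$ according to which block interval $[p_n,p_{n+1}-1]$ its values fall into, use the monotonicity of $\ww$ to replace the shifted weights $w_{s_n+j-1}$ (the paper's $w_{j+q_n}$) by the fresh weights $w_j$, and then bound each block's contribution by $|c_n|^p\Vert\yy_n\Vert_g^p=|c_n|^p$. The only cosmetic difference is that you first reduce to $\rang(\phi)\subseteq\bigcup_n\supp(\yy_n)$, whereas the paper skips this and partitions the domain of $\phi$ directly via $J_n=\phi^{-1}([p_n,p_{n+1}-1])$; since $d_i=0$ off the supports this changes nothing.
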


\begin{proof} Let 
\[\yy_n=\sum_{i=p_n}^{p_{n+1}-1}a_i\gb_i,\quad n\in \mathbb N,\]
be a normalized block basic sequence. For  $i\in\NN$ define $n=n(i)\in\NN$ by $p_n\le i \le p_{n+1}-1$.
Let $(b_n)_{n=1}^\infty\in c_{00}$. Denote $(c_i)_{i=1}^\infty:=\sum_{n=1}^\infty b_n\yy_n$. We have $c_i=b_{n(i)} a_i$ for all $i\in\NN$.
Let $\phi\in\OO$.  For each $n\in\NN$,   $J_n:=\{i\in\NN \colon p_n\le \phi(i)\le p_{n+1}-1\}$ is an integer interval and, then, for some $q_n$, $r_n\in\NN\cup\{0\}$,
$J_n=\{j\in\NN \colon 1+q_n\le j\le q_n+r_n\}$.  We have
\begin{align*}
\sum_{n=1}^\infty |c_{\phi(n)}|^pw_n
&=\sum_{n=1}^\infty|b_n|^p  \sum_{i\in J_n}|a_{\phi(i)}|^pw_{i}\\
&=\sum_{n=1}^\infty|b_n|^p  \sum_{j=1}^{r_n} |a_{\phi(j+q_n)}|^p w_{j+q_n}\\
&\le\sum_{n=1}^\infty|b_n|^p  \sum_{j=1}^{r_n} |a_{\phi(j+q_n)}|^p w_{j}\\
&\le\sum_{n=1}^\infty|b_n|^p\, \Vert \yy_n\Vert^p_g\\
&=\sum_{n=1}^\infty|b_n|^p.
\end{align*}
Taking the supremum on $\phi$ yields 
$\left\Vert \sum_{n=1}^\infty b_n\yy_n\right\Vert^p \le \sum_{n=1}^\infty|b_n|^p.$
\end{proof}

 We are now able to state a powerful result. 
In its proof we will make use of the following construction.
  \begin{definition}\label{DefShiftLeft} Let  $(\xx_n)_{n=1}^\infty$ be a  basis in a Banach space $X$. Given a   block basic sequence  $(\yy_n)_{n=1}^\infty$ of  $(\xx_n)_{n=1}^\infty$, 
\[
\yy_n=\sum_{i=p_n}^{p_{n+1}-1} a_i \xx_i, \quad n\in\NN,
\]
 its \textbf{left-shifted block basic sequence}  $(\hat\yy_n)_{n=1}^\infty$ is the block basic sequence of $(\xx_n)_{n=1}^\infty$ constructed from $(\yy_n)_{n=1}^\infty$ as follows:
Consider $\phi\in\OO$ with range $\{ i\in\NN\colon a_i\not=0\}$ and put
\[
\hat\yy_n=\sum_{i=\hat p_n}^{\hat p_{n+1}-1}  a_{\phi(i)} \xx_i, \quad n\in\NN,
\]
where 
$\hat p_1=1$, and  $\hat p_{n+1}-\hat p_n=|\supp\yy_n|$ for all $n\in\NN$.
\end{definition}

\begin{theorem}\label{subsequence-equivalent-lp} Let $1\leq p<\infty$ and $\ww\in\WW$. Suppose $(\yy_n)_{n=1}^{\infty}$
is a uniformly null normalized block basic sequence of the canonical basis of $g(\ww, p)$.
Then for each $\epsilon>0$ there exists a subsequence $(\yy_{n_k})_{k=1}^\infty$ of $(\yy_n)_{n=1}^{\infty}$
  that is $(1+\epsilon)$-equivalent to the canonical basis  of $\ell_p$ and such that   $[{\yy}_{n_k}]_{k=1}^\infty$ is $(1+\epsilon)$-complemented in  $g(\ww, p)$.
  \end{theorem}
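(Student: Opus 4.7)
The plan is to let the uniform nullity supply a single tail-truncation parameter $\delta$ that simultaneously controls the $\ell_p$-equivalence constant and the norm of the projection. For each $n$ the supremum defining $\Vert\yy_n\Vert_g=1$ is attained since $\yy_n\in c_{00}$, so I fix $\phi_n\in\OO_f$ with $\rang\phi_n\subseteq\supp\yy_n$, $r_n:=r(\phi_n)$, and $\sum_{j=1}^{r_n}|a^{(n)}_{\phi_n(j)}|^p w_j=1$. For $\delta\in(0,1)$ to be chosen later, let $L_n$ be the largest integer in $[0,r_n]$ with $\sum_{j=1}^{L_n}|a^{(n)}_{\phi_n(j)}|^p w_j\le\delta$ and set $M_n:=r_n-L_n$, so the ``tail'' $\sum_{j=L_n+1}^{r_n}|a^{(n)}_{\phi_n(j)}|^p w_j$ lies in $[1-\delta,1]$. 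Since $C_n:=\max_i|a_i^{(n)}|\to 0$ and $\ww\notin\ell_1$, one checks that $L_n\to\infty$: otherwise the inequality $C_n^p\sum_{j=1}^{L_n+1}w_j>\delta$ would fail. I then pass inductively to a subsequence $(n_k)$ satisfying $L_{n_k}\ge t_k:=\sum_{j<k}M_{n_j}$ and set $\psi_k(i):=\phi_{n_k}(L_{n_k}+i)$ for $1\le i\le M_{n_k}$.

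The upper estimate $\Vert\sum_k b_k\yy_{n_k}\Vert_g\le (\sum_k|b_k|^p)^{1/p}$ is immediate from Proposition~\ref{lp-dominates}. For the matching lower estimate, form the increasing map $\phi$ by $\phi(t_k+i):=\psi_k(i)$, which is genuinely increasing since the supports $A_k:=\supp\yy_{n_k}$ are ordered blocks and each $\psi_k$ is increasing, and plug it into the supremum defining $\Vert\cdot\Vert_g$. Using $w_{t_k+i}\ge w_{L_{n_k}+i}$, which is precisely where the subsequence condition $t_k\le L_{n_k}$ enters, one obtains
\[
\Bigl\Vert\sum_k b_k\yy_{n_k}\Bigr\Vert_g^p\ge\sum_k|b_k|^p\sum_{i=1}^{M_{n_k}}|a^{(n_k)}_{\psi_k(i)}|^p w_{L_{n_k}+i}\ge (1-\delta)\sum_k|b_k|^p,
\]
so $(\yy_{n_k})$ is $(1-\delta)^{-1/p}$-equivalent to the canonical basis of $\ell_p$.

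For the projection, for $f=\sum_i c_i\gb_i\in g(\ww,p)$ define
\[
\tilde a_k^*(f):=\sum_{i=1}^{M_{n_k}}|a^{(n_k)}_{\psi_k(i)}|^{p-1}\overline{\mathrm{sgn}(a^{(n_k)}_{\psi_k(i)})}\,c_{\psi_k(i)}\,w_{L_{n_k}+i}
\]
and $\hat a_k^*:=\tilde a_k^*/\tilde a_k^*(\yy_{n_k})$, which is well defined because $\tilde a_k^*(\yy_{n_k})\in[1-\delta,1]$. The sets $A_k$ are pairwise disjoint, so $\hat a_k^*(\yy_{n_j})=\delta_{jk}$ and $P(f):=\sum_k\hat a_k^*(f)\yy_{n_k}$ is a formal projection onto $[\yy_{n_k}]$. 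H\"older with $1/p+1/q=1$ yields $|\tilde a_k^*(f)|^p\le\tilde a_k^*(\yy_{n_k})^{p/q}\sum_i|c_{\psi_k(i)}|^p w_{L_{n_k}+i}$, and since $p/q-p=-1$ this gives $|\hat a_k^*(f)|^p\le (1-\delta)^{-1}\sum_i|c_{\psi_k(i)}|^p w_{L_{n_k}+i}$. Summing in $k$ and invoking the same map $\phi$ in reverse, so now with $w_{L_{n_k}+i}\le w_{t_k+i}$, gives $\sum_k|\hat a_k^*(f)|^p\le (1-\delta)^{-1}\Vert f\Vert_g^p$, and a final application of Proposition~\ref{lp-dominates} to $P(f)$ delivers $\Vert P(f)\Vert_g\le (1-\delta)^{-1/p}\Vert f\Vert_g$. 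Choosing $\delta$ so that $(1-\delta)^{-1/p}\le 1+\epsilon$ concludes both halves at once. The central difficulty, and the conceptual heart of the proof, is the condition $t_k\le L_{n_k}$, which allows one single map $\phi$ to witness the $\ell_p$ lower bound (weights going one way) and to absorb the projection norm into $\Vert f\Vert_g$ (weights going the other way).
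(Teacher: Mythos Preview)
Your argument is correct and follows the same strategic outline as the paper's proof: select a subsequence so that a single increasing map $\phi\in\OO$, built by concatenating pieces of the norm-attaining maps of the individual blocks, simultaneously certifies the $\ell_p$ lower bound and bounds the projection via H\"older's inequality. The upper $\ell_p$-estimate comes from Proposition~\ref{lp-dominates} in both proofs.

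Where you differ is in the bookkeeping. The paper first replaces each $\yy_n$ by a minimal $\tilde\yy_n\prec\yy_n$, passes to the left-shifted block basic sequence (Definition~\ref{DefShiftLeft}), and then runs a recursive construction that uses \emph{both} $\ww\in c_0$ and the uniform nullity of the coefficients to force $A_k\ge\alpha$; the operators $S$, $T$, $R$ are then assembled from the $U_\psi$, $V_\psi$, $P_A$ machinery. You bypass minimality and left-shifting entirely: you work directly with a norm-attaining $\phi_n$ (which exists for any $\yy_n\in c_{00}$, with range inside $\supp\yy_n$, by the first paragraph of the proof of Lemma~\ref{NormAttaining}), discard a head of mass at most $\delta$, and show $L_n\to\infty$ using only $\ww\notin\ell_1$ and $C_n\to 0$. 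The single condition $t_k\le L_{n_k}$ then does the work that the paper's two-parameter $(L,M)$ construction does. Your projection is written in dual-functional form rather than operator form, but after unwinding it is the same H\"older computation. Net effect: your route is somewhat shorter and avoids the auxiliary notions of minimal vectors and left-shifted sequences, at the cost of the paper's more explicit operator factorization $T\circ R=\id_{\ell_p}$, which it reuses later (e.g.\ in the proof of Theorem~\ref{main}(vi)).

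One cosmetic point: in your closing sentence the phrase ``weights going one way\dots the other way'' is potentially confusing, since both applications use the \emph{same} inequality $w_{L_{n_k}+i}\le w_{t_k+i}$; what changes is only the direction in which it is inserted into the chain of estimates.
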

\begin{proof} By Proposition~\ref{subsymmetry}~(i), without loss of generality we can assume that each $\yy_n$ has nonnegative coefficients with respect to $(\gb_i)_{i=1}^\infty$.
For each $n\in\NN$ pick  a minimal $\tilde \yy_n\prec \yy_n$. Let 
\[
\hat\yy_n=\sum_{i=p_n}^{p_{n+1}-1}a_i\gb_i, \quad n\in\NN,
\]
be the left-shifted block basic sequence of $({\tilde\yy}_n)_{n=1}^\infty$,
where $(a_i)_{i=1}^\infty$ is a sequence of positive scalars converging to zero and  $(p_n)_{n=1}^\infty$  is an increasing sequence of integers with $p_1=1$.
 By Lemma~\ref{largeblocks} we have $\lim_n(p_{n+1}-p_n)=\infty$.
Let $\ww=(w_n)_{n=1}^\infty$ and pick $\alpha=(1+\epsilon)^{-p}\in(0,1)$. 
We claim that there exist  increasing sequences of positive integers $(q_k)_{k=1}^\infty$ and $(n_k)_{k=1}^\infty$ with $q_1=1$ so that  for each $k\in \NN$:\begin{itemize}
\item $q_{k+1}-q_k=p_{1+n_k}-p_{n_k}$,
\item $A_k:=\sum_{i=q_k}^{q_{k+1}-1} a_{i+p_{n_k}-q_k}^p w_{i}\ge \alpha$,
\end{itemize}

To prove the claim we proceed recursively.
Suppose that $q_k$ and $n_{k-1}$ have been constructed (for the case $k=1$ we  put $n_0=0$).
Since $\ww\in c_0$ we can find $L\in\NN$ so that
\[\sum_{n=L+1}^{L+q_k-1}w_n<\frac{1-\alpha}{2}.\]
Then we choose $M\in\NN$ such that for any $i\ge M$
\[
a_i<\left(\frac{1-\alpha}{2L}\right)^{1/p}.
\]
 Select $j=n_k\in\NN$ so that $j>n_{k-1}$, $p_j\ge M$ and $p_{j+1}-p_j>L+q_k$. Set 
  $q_{k+1}=p_{j+1}-p_j+q_k$.  Let $\phi\in\OO$ determined by 
\[
\rang(\phi)=A:=\cup_{n=1}^\infty \supp(\tilde \yy_n).
\]
We have $\tilde\yy_j=V_\phi(\hat\yy_j)$. Hence, by
 Lemma~\ref{subsymmetry}~(iii), $\hat \yy_j$ is minimal. Then, by Lemma~\ref{NormAttaining},
\[1=\Vert \hat\yy_j\Vert_{g}^p=\sum_{n=1}^{p_{j+1}-p_j} a_{p_j+n-1}^pw_{n}
=\sum_{i=q_k}^{q_{k+1}-1} a_{p_j-q_k+i}^p w_{i-q_k+1}
,\]
and so
\begin{align*}
1-\sum_{i=q_k}^{q_{k+1}-1} a_{i+p_j-q_k}^p w_{i}
&=\sum_{i=q_k}^{q_{k+1}-1}  a_{i+p_j-q_k}^p (w_{i-q_k+1} -w_{i})\\
&\le \sum_{i=q_k}^{q_{k+1}-1} \frac{1-\alpha}{2L} (w_{i-q_k+1} -w_{i})\\
&\le  \sum_{i=q_k}^{q_k+L-1}  \frac{1-\alpha}{2L} +  \sum_{i=q_k+L}^{q_{k+1}-1} (w_{i-q_k+1} -w_{i})\\\
&=   \frac{1-\alpha}{2} +  \sum_{n=L+1}^{q_{k+1}-q_k}  w_n - \sum_{n=L+q_k}^{q_{k+1}-1} w_{n}\\
&=   \frac{1-\alpha}{2} +  \sum_{n=L+1}^{L+q_k-1}  w_n - \sum_{n=q_{k+1}-q_k+1}^{q_{k+1}-1} w_{n}\\
&\le  \frac{1-\alpha}{2} + \frac{1-\alpha}{2} =1-\alpha.
\end{align*}
This completes the proof of our claim. 
 
 Consider the linear map $S\colon\FF^\NN\to\FF^\NN$ given by
\[
S((c_i)_{i=1}^\infty)
=\left(\frac{1}{A_k} \sum_{i=q_k}^{q_{k+1}-1}  a_{i+p_{n_k}-q_{k}}^{p-1}w_i c_i\right)_{k=1}^\infty.
\]
 Let $f=(c_i)_{i=1}^\infty \in \FF^\NN$. H\"older's inequality gives
 \begin{align*}\left\Vert S(f)\right\Vert_p^p
&=\sum_{k=1}^\infty \frac{1}{A_k^p} \left|\sum_{i=q_k}^{q_{k+1}-1} c_i a_{i+p_{n_k}-q_{k}}^{p-1}w_i \right|^p\\
&\leq \sum_{k=1}^\infty \frac{1}{A_k^p} \left(\sum_{i=q_k}^{q_{k+1}-1}|c_i|^pw_i\right)\left(\sum_{i=q_k}^{q_{k+1}-1}a_{i+p_{n_k}-q_{k}}^pw_i\right)^{p-1}\\
&=\sum_{k=1}^\infty \frac{1}{A_k} \left(\sum_{i=q_k}^{q_{k+1}-1}|c_i|^pw_i\right)\\ 
&\leq\alpha^{-1}
\sum_{k=1}^\infty  \left(\sum_{i=q_k}^{q_{k+1}-1}|c_i|^pw_i\right)\\
&=\alpha^{-1}
\Vert f\Vert_{g}^p.\end{align*}
That is, $S$ is an $\alpha^{-1/p}$-bounded  operator  from  $g(\ww,p)$   into $\ell_p$.

The left-shifted block basic sequence of $(\hat\yy_{n_k})_{k=1}^\infty$,   which we denote 
by  $(\zz_k)_{k=1}^\infty$, is given by the formula
\[
\zz_k=\sum_{i=q_k}^{q_{k+1}-1} a_{i+p_{n_k}-q_k} \gb_i,   \quad  k\in\NN.
\] 
Since the left-shifted block basic sequences of $(\tilde\yy_{n_k})_{n=1}^\infty$ and 
$(\hat{\yy}_{n_k})_{k=1}^\infty$ coincide, there is $\psi\in\OO$ such that $V_\psi(\zz_k)=\tilde\yy_{n_k}$ for all $k\in \NN$. Let $T=S\circ U_\psi \circ P_A$.
 By Proposition~\ref{subsymmetry}~(i) and (ii), $T$ is a $(1+\epsilon)$-bounded operator 
from
$g(\ww,p)$ into  $\ell_p$.

 By Proposition~\ref{lp-dominates},  $ (\yy_{n_k})_{k=1}^{\infty} \lesssim_1 (\ff_k)_{k=1}^\infty$.  In other words, there is a  norm-one operator  
 $R\colon \ell_p \to g(\ww,p)$ with $R(\ff_k)=\yy_{n_k}$ for all $k\in\NN$.
For every $k\in\NN$ we have 
 \begin{align*}
(T\circ R)(\ff_k)&= S(U_\psi(P_A( R(\ff_k))))\\
&=S(U_\psi(P_A(\yy_{n_k})))\\
 &=S(U_\psi(\tilde\yy_{n_k}))\\
 &=S(U_\psi(V_\psi(\zz_k)))\\
 &=S(\zz_k)\\
 &=\frac{1}{A_k}\left( \sum_{i=q_k}^{q_{k+1}-1}a_{i+p_{n_k}-q_k} a_{i+p_{n_k}-q_k}^{p-1}w_i\right) \ff_k\\
 &=\ff_k.
 \end{align*}
Consequently, $T\circ R=\id_{\ell_p}$. We infer that $R\circ T$ is a projection from 
$g(\ww,p)$ onto $[\yy_{n_k}]_{k=1}^\infty$ and that 
$ (\ff_k)_{k=1}^\infty\lesssim_{1+\epsilon}(\yy_{n_k})_{k=1}^\infty$.
\end{proof}

\begin{remark}If we consider the natural lattice structure  both in $g(\ww,p)$ and in $\ell_p$,
the maps $R$ and $T$ constructed in the proof of Theorem~\ref{subsequence-equivalent-lp} are  lattice homomorphisms.

\end{remark}

\begin{corollary}\label{dominates-or-equivalent} Every normalized block basic sequence of $(\gb_n)_{n=1}^\infty$ admits a subsequence that dominates $(\gb_n)_{n=1}^\infty$.\end{corollary}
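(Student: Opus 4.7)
The plan is to split into cases using the uniformly-null versus gliding-hump dichotomy (Remark~\ref{dichotomy-block}) and then quote the already-established results of the paper. Let $(\yy_n)_{n=1}^\infty$ be a normalized block basic sequence of $(\gb_n)_{n=1}^\infty$. After passing to a subsequence (and keeping the same name), I may assume that either $(\yy_n)_{n=1}^\infty$ is uniformly null, or it satisfies the gliding hump property.

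In the gliding-hump case, I would invoke Proposition~\ref{subsymmetric-dominates}~(ii) directly. Since by Proposition~\ref{subsymmetry}~(iv) the canonical basis $(\gb_n)_{n=1}^\infty$ of $g(\ww,p)$ is subsymmetric, this immediately yields $(\gb_n)_{n=1}^\infty \lesssim (\yy_n)_{n=1}^\infty$, which is precisely the desired conclusion (no further subsequence needed).

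In the uniformly-null case, I would apply Theorem~\ref{subsequence-equivalent-lp} to extract a subsequence $(\yy_{n_k})_{k=1}^\infty$ that is equivalent to the canonical basis $(\ff_k)_{k=1}^\infty$ of $\ell_p$. The remaining ingredient is that the $\ell_p$ basis dominates $(\gb_n)_{n=1}^\infty$, and this is exactly the content of the norm-one inclusion $\ell_p \hookrightarrow g(\ww,p)$ supplied by Proposition~\ref{embeddings}: for every $(a_n)_{n=1}^\infty \in c_{00}$,
\[
\left\Vert \sum_{n=1}^\infty a_n \gb_n \right\Vert_g \;\le\; \left\Vert \sum_{n=1}^\infty a_n \ff_n \right\Vert_p,
\]
so $(\gb_n)_{n=1}^\infty \lesssim_1 (\ff_n)_{n=1}^\infty$. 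Combining with the equivalence $(\ff_k)_{k=1}^\infty \approx (\yy_{n_k})_{k=1}^\infty$ gives $(\gb_n)_{n=1}^\infty \lesssim (\yy_{n_k})_{k=1}^\infty$.

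There is no real obstacle here: the entire proof is a short assembly of Remark~\ref{dichotomy-block}, Proposition~\ref{subsymmetric-dominates}~(ii), Theorem~\ref{subsequence-equivalent-lp}, and Proposition~\ref{embeddings}. The only mild subtlety is noticing that in the gliding-hump case one does not actually need to pass to a further subsequence, whereas in the uniformly-null case one does, so the statement of the corollary is tailored to cover both cases uniformly by allowing a subsequence to be chosen.
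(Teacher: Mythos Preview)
Your proof is correct and follows exactly the same route as the paper: the dichotomy of Remark~\ref{dichotomy-block}, then Proposition~\ref{subsymmetric-dominates}~(ii) in the gliding-hump case and Theorem~\ref{subsequence-equivalent-lp} together with the embedding $\ell_p\hookrightarrow g(\ww,p)$ from Proposition~\ref{embeddings} in the uniformly-null case. The paper states this in a single sentence; you have simply unpacked the details.
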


\begin{proof}This is an immediate consequence of Remark~\ref{dichotomy-block}, Proposition~\ref{subsymmetric-dominates}~(ii), Theorem \ref{subsequence-equivalent-lp}, 
and the the embedding $\ell_p\subseteq g(\ww,p)$ provided by Proposition~\ref{embeddings}, which reads as $(\gb_n)_{n=1}^\infty\lesssim (\ff_n)_{n=1}^\infty$.
\end{proof}

Before proving Theorem~\ref{main} we need a couple more results.

\begin{proposition}\label{boundedly-complete} Let $1\leq p<\infty$ and $\ww\in\WW$. 
 If  $(\yy_n)_{n=1}^\infty$ is a semi-normalized block basic sequence of  the canonical basis of $g(\ww, p)$ then
\[\lim_{m\to \infty}\left\Vert\sum_{n=1}^m\yy_n\right\Vert_{g}=\infty.\]
\end{proposition}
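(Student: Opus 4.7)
The plan is to extract a subsequence of $(\yy_n)_{n=1}^\infty$ whose partial sums have norms tending to infinity, and then upgrade this to the full sequence using the norm-one coordinate projections on $g(\ww,p)$ guaranteed by Proposition~\ref{subsymmetry}(i).

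For the extraction, I would first normalize $(\yy_n)_{n=1}^\infty$ (which preserves its block basic character) and then invoke Corollary~\ref{dominates-or-equivalent} to produce a subsequence $(\yy_{n_k})_{k=1}^\infty$ dominating the canonical basis $(\gb_n)_{n=1}^\infty$; rescaling back and using the $1$-unconditionality of $(\gb_n)_{n=1}^\infty$ yields the same domination for the original semi-normalized sequence. Combining this with Lemma~\ref{ConstantCoefficients}(i) and the hypothesis $\ww\notin\ell_1$ gives
\[
\left\Vert \sum_{k=1}^K \yy_{n_k} \right\Vert_g \gtrsim \left\Vert \sum_{k=1}^K \gb_k \right\Vert_g = \sum_{k=1}^K w_k \longrightarrow \infty.
\]

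To push this divergence to the full sequence, I would exploit the disjointness of supports inherent to block basic sequences: for each $K$ and each $m\geq n_K$,
\[
\sum_{k=1}^K \yy_{n_k} = P_B\!\left(\sum_{n=1}^m \yy_n\right), \qquad B := \bigcup_{k=1}^K \supp(\yy_{n_k}),
\]
and Proposition~\ref{subsymmetry}(i) tells me that $P_B$ is a contraction on $g(\ww,p)$, so $\Vert\sum_{n=1}^m \yy_n\Vert_g \geq \Vert\sum_{k=1}^K \yy_{n_k}\Vert_g$. Divergence along the subsequence then forces divergence of the full partial sums.

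I anticipate the main obstacle to be precisely this last transfer step. Since $(\yy_n)_{n=1}^\infty$ is only assumed semi-normalized and block basic, it need not be subsymmetric nor equivalent to its subsequences, so no equivalence constant is available to bootstrap between the two. The clean resolution is the lattice-like behavior of $g(\ww,p)$ encoded in Proposition~\ref{subsymmetry}(i): contractivity of coordinate projections replaces any would-be equivalence argument by an honest norm inequality.
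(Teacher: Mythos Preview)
Your proof is correct and follows essentially the same route as the paper's. The only cosmetic difference is that you invoke Corollary~\ref{dominates-or-equivalent} as a black box to obtain the dominating subsequence, whereas the paper re-derives that very conclusion inline from Remark~\ref{dichotomy-block}, Proposition~\ref{subsymmetric-dominates}(ii), and Theorem~\ref{subsequence-equivalent-lp}; the transfer from the subsequence to the full partial sums via the contractive coordinate projection $P_B$ is exactly what the paper means when it appeals to the $1$-unconditionality of $(\gb_n)_{n=1}^\infty$ in the final inequality chain.
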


\begin{proof} 
From Remark~\ref{dichotomy-block} in combination with Proposition~\ref{subsymmetric-dominates}~(ii) and Theorem~\ref{subsequence-equivalent-lp} we infer that we can find a subsequence $(\yy_{n_k})_{k=1}^\infty$ that either dominates $(\gb_n)_{n=1}^\infty$ or else is equivalent to $(\ff_n)_{n=1}^\infty$. 
 In either case, since, by Proposition~\ref{embeddings}, $(\ff_n)_{n=1}^\infty$ dominates $(\gb_n)_{n=1}^\infty$, we have $(\gb_n)_{n=1}^\infty\lesssim (\yy_{n_k})_{k=1}^\infty$. 
 Put $\ww=(w_n)_{n=1}^\infty$. Appealing to the $1$-unconditionality of $(\gb_n)_{n=1}^\infty$
 and to Lemma~\ref{ConstantCoefficients}, 
 \[
\sum_{k=1}^m w_k= \left\Vert\sum_{k=1}^m\gb_k\right\Vert_{g}
 \lesssim \left\Vert\sum_{k=1}^m\yy_{n_k}\right\Vert_{g}
 \le \left\Vert\sum_{n=1}^j\yy_n\right\Vert_{g}, \quad m\in\NN, \, j\ge n_m.
 \]
 Since $\ww\notin\ell_1$ we are done.
\end{proof}

\begin{proposition}\label{block-basis-equivalent-lp} Let $1\leq p<\infty$ and $\ww=(w_n)_{n=1}^\infty\in\WW$.  Suppose $(\yy_n)_{n=1}^\infty$ is a block basic sequence of the canonical basis of $g(\ww,p)$. Then for any $\epsilon>0$ there exists a further block basic sequence of $(\yy_n)_{n=1}^\infty$ that is $(1+\epsilon)$-equivalent to the canonical basis of $\ell_p$ and $(1+\epsilon)$-complemented in $g(\ww,p)$.
\end{proposition}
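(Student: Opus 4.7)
The plan is to reduce to Theorem~\ref{subsequence-equivalent-lp}, which provides the conclusion for normalized uniformly null block basic sequences. First, I would replace $(\yy_n)_{n=1}^\infty$ by $(\yy_n/\Vert\yy_n\Vert_g)_{n=1}^\infty$, which is itself a block basic sequence of $(\yy_n)_{n=1}^\infty$ (each block being a single scaled $\yy_n$). Any further block basic sequence of the normalized one is then also a block basic sequence of $(\yy_n)_{n=1}^\infty$, so I may assume from the outset that $(\yy_n)_{n=1}^\infty$ is normalized.

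I would then apply the dichotomy of Remark~\ref{dichotomy-block}. If $(\yy_n)_{n=1}^\infty$ is uniformly null, Theorem~\ref{subsequence-equivalent-lp} applied directly produces the desired subsequence (which, being a subsequence of a block basic sequence, is itself a block basic sequence of $(\yy_n)_{n=1}^\infty$). Otherwise, I extract a subsequence $(\yy_{n_k})_{k=1}^\infty$ verifying the gliding hump property, and the plan is to repackage it into a normalized uniformly null block basic sequence of $(\yy_n)_{n=1}^\infty$, to which I then apply Theorem~\ref{subsequence-equivalent-lp} a second time.

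Concretely, Proposition~\ref{boundedly-complete} applied to $(\yy_{n_k})_{k=1}^\infty$ gives $\left\Vert\sum_{k=1}^m\yy_{n_k}\right\Vert_g\to\infty$, and by the triangle inequality I may choose integers $0=m_0<m_1<m_2<\cdots$ inductively so that
\[
\left\Vert\sum_{j=m_{k-1}+1}^{m_k}\yy_{n_j}\right\Vert_g\ge k, \quad k\in\NN.
\]
Letting $\lambda_k$ be the reciprocal of the left-hand side, I set
\[
\zz_k:=\lambda_k\sum_{j=m_{k-1}+1}^{m_k}\yy_{n_j},\quad k\in\NN.
\]
Then $(\zz_k)_{k=1}^\infty$ is a normalized block basic sequence of $(\yy_n)_{n=1}^\infty$ with $\lambda_k\le 1/k\to 0$. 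Since each $\yy_{n_j}$ is normalized and $w_1=1$, each coefficient of $\yy_{n_j}$ in the canonical basis has modulus at most $1$, so the coefficients of $\zz_k$ have modulus at most $\lambda_k$. This makes $(\zz_k)_{k=1}^\infty$ uniformly null. Theorem~\ref{subsequence-equivalent-lp} then yields a subsequence $(\zz_{k_l})_{l=1}^\infty$ that is $(1+\epsilon)$-equivalent to the canonical basis of $\ell_p$ and spans a $(1+\epsilon)$-complemented subspace of $g(\ww,p)$, and this is the required block basic sequence of $(\yy_n)_{n=1}^\infty$.

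The main obstacle I expect is the gliding hump case: one cannot simply pass to a subsequence, but must genuinely rescale and recombine blocks of $(\yy_{n_k})_{k=1}^\infty$. The divergence supplied by Proposition~\ref{boundedly-complete}, which ultimately rests on $\ww\notin\ell_1$, is precisely what reconciles the normalization of $\zz_k$ with the need for its coefficients to vanish uniformly, bringing the construction into the scope of Theorem~\ref{subsequence-equivalent-lp}.
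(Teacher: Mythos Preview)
Your argument is correct and follows essentially the same route as the paper: use Proposition~\ref{boundedly-complete} to group consecutive blocks into long sums of norm at least $k$, normalize to obtain a uniformly null block basic sequence, and then invoke Theorem~\ref{subsequence-equivalent-lp}. The one difference is that your detour through Remark~\ref{dichotomy-block} and the gliding hump subsequence is unnecessary: you never use the gliding hump property after extracting it, and the averaging construction (steps leading to $(\zz_k)_{k=1}^\infty$) works verbatim for the original normalized sequence $(\yy_n)_{n=1}^\infty$, exactly as the paper does.
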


\begin{proof}  Without loss of generality we assume
 that $(\yy_n)_{n=1}^\infty$ is  normalized.  By Proposition~\ref{boundedly-complete} we can  recursively construct an increasing sequence $(p_n)_{n=1}^\infty$ of positive integers such that
$ \left\Vert\sum_{i=p_n}^{p_{n+1}-1}\yy_i\right\Vert_{g} \ge n$ for all $n\in\NN$.
Then the block basic sequence
\[\zz_n=\frac{\sum_{i=p_n}^{p_{n+1}-1}\yy_i}{\Vert\sum_{i=p_n}^{p_{n+1}-1}\yy_i\Vert_{g}},\quad n\in\NN,\]
   is  uniformly null,  and so by Theorem~\ref{subsequence-equivalent-lp} we can find a subsequence $(\zz_{n_k})_{k=1}^\infty$ that is $(1+\epsilon)$-equivalent to the canonical basis of $\ell_p$
and $(1+\epsilon)$-complemented in $g(\ww,p)$.\end{proof}

\begin{proof}[Completion of the Proof of Theorem~\ref{main}]\
\smallskip

(i)  
By Proposition~\ref{boundedly-complete}, no block basic sequence of $(\gb_n)_{n=1}^\infty$ is equivalent to the canonical basis of $c_0$. Appealing to  \cite[Theorem 3.3.2]{AK2016}) we infer that 
$(\gb_n)_{n=1}^\infty$ is a boundedly complete basic sequence. 

Let us prove that $[\gb_n]_{n=1}^\infty=g(\ww,p)$.
Take $f=(a_n)_{n=1}^\infty\in g(\ww,p)$. Then $\sup_m \Vert \sum_{n=1}^m a_n \gb_n\Vert_g<\infty.$
Since $(\gb_n)_{n=1}^\infty$ is boundedly complete, the series
$\sum_{n=1}^\infty a_n \gb_n$ converges to some $h\in g(\ww,p)$. Obviously, $f=h$.

\noindent

(ii)   Let  $(\yy_n)_{n=1}^\infty$ be a basic sequence in $g(\ww,p)$ equivalent to its canonical basis.
 By Corollary~\ref{garling-weakly-null},  $(\yy_n)_{n=1}^\infty$ is weakly null. Then,  applying 
 Theorem~\ref{BPSP} 
  and passing to  a subsequence, we can assume that $(\yy_n)_{n=1}^\infty$ is a block basic sequence of $(\gb_n)_{n=1}^\infty$.
By Theorem \ref{subsequence-equivalent-lp} and Remark~\ref{dichotomy-block}, since $(\gb_n)_{n=1}^\infty$ has no subsequence equivalent to the $\ell_p$-basis, passing to a further subsequence we have that
$(\yy_n)_{n=1}^\infty$ has the gliding hump property. Let $K\ge 1$ be such that
$(\yy_n)_{n=1}^\infty\lesssim_K (\gb_n)_{n=1}^\infty$.  
Write 
\[\yy_n=\sum_{i=p_n}^{p_{n+1}-1}a_i \gb _i, \quad n\in\NN,\]
 for some $(a_n)_{n=1}^\infty\in\FF^\NN$ and  some increasing  sequence $(p_n)_{n=1}^\infty$ with $p_1=1$.
 For each $n\in\NN$ pick an integer sequence $({i_n})_{n=1}^\infty$ with $p_n\leq i_n<p_{n+1}$ and so that $\inf_{n\in\NN}|a_{i_n}|:=c>0$.  Observe that for any $(b_n)_{n=1}^\infty\in c_{00}$, by 1-subsymmetry of $(\gb_n)_{n=1}^\infty$ we have
\[
\left\Vert\sum_{n=1}^\infty \frac{b_{i_n}}{a_{i_n}} \yy_n\right\Vert_g
\leq K\left\Vert\sum_{n=1}^\infty\frac{b_{i_n}}{a_{i_n}}\gb_{i_n}\right\Vert_g
\leq\frac{K}{c}\left\Vert\sum_{n=1}^\infty b_{i_n}\gb_{i_n}\right\Vert_g
\leq\frac{K}{c}\left\Vert\sum_{n=1}^\infty b_n\gb_n\right\Vert_g.
\]
Thus we can define a bounded linear map $P\colon g(\ww,p)  \to [\yy_n]_{n=1}^\infty$ by the rule
\[\sum_{n=1}^\infty b_n\ee_n\mapsto \sum_{n=1}^\infty \frac{b_{i_n}}{a_{i_n}}\yy_n.\]
Since $P(\yy_n)=\yy_n$ for all $n\in\NN$ we are done.

 (iii)    Let us suppose that $(\gb_n)_{n=1}^\infty$ fails to be shrinking.  Then due to unconditionality of $(\gb_n)_{n=1}^\infty$, there exists a block basic sequence $(\yy_n)_{n=1}^\infty$ of $(\gb_n)_{n=1}^\infty$ that is equivalent to the canonical $\ell_1$-basis (see, e.g., \cite[Theorem 3.3.1]{AK2016}).  Now, by Proposition \ref{block-basis-equivalent-lp}, we can find a further block basic sequence equivalent to the canonical $\ell_p$-basis.  However, this is impossible if $p\neq 1$.  Thus, for $p>1$, $(\gb_n)_{n=1}^\infty$ is shrinking, and since it is also boundedly complete by part~(i), and unconditional, it must span a reflexive space by a theorem of James (see, e.g., \cite[Theorem 3.2.19]{AK2016}).  This proves the first part of (iii). The second part about nonreflexivity in case $p=1$ follows since $g(\ww,1)$ contains a (nonreflexive) copy of $\ell_1$ by Proposition~\ref{block-basis-equivalent-lp}.

 (iv)  Assume that $g(\ww,p)$ embeds into $\ell_p$. Then $\ell_p$ contains a basic sequence
$(\yy_n)_{n=1}^\infty\approx (\gb_n)_{n=1}^\infty$.
By Corollary~\ref{garling-weakly-null}, $(\yy_n)_{n=1}^\infty$ is 
 weakly null. Hence (see, e.g., \cite[Proposition 2.1.3]{AK2016})  $(\yy_n)_{n=1}^\infty$ has a sequence equivalent to the canonical basis $(\ff_n)_{n=1}^\infty$ of $\ell_p$.
 By Proposition~\ref{subsymmetry},  $(\gb_n)_{n=1}^\infty\approx (\ff_n)_{n=1}^\infty$, which it  false by Proposition~\ref{embeddings}.
 
 (v)  Fix $\epsilon>0$, and let $Y$ be any infinite-dimensional closed subspace of $g(\ww,p)$.  By a classical result of Mazur (see, e.g., \cite[Theorem 1.4.5]{AK2016}),  we can find a normalized basic sequence $(\yy_n)_{n=1}^\infty$ in $Y$.  In case $p>1$, this basic sequence must be weakly null by the reflexivity of $g(\ww,p)$.  For the case $p=1$, we can assume by Rosenthal's $\ell_1$ Theorem (see, e.g., \cite[Theorem 11.2.1]{AK2016}) together with James' $\ell_1$ Distortion Theorem (see, e.g., \cite[Theorem 11.3.1]{AK2016}) that $(\yy_n)_{n=1}^\infty$ admits a weakly Cauchy subsequence.  By passing to a further subsequence if necessary, the sequence
\[\left(\frac{\yy_{2n+1}-\yy_{2n}}{\Vert\yy_{2n+1}-\yy_{2n}\Vert_g}\right)_{n=1}^\infty\]
is normalized and weakly null.  Let us relabel if necessary so that in all cases $1\leq p<\infty$ we have found a normalized and weakly null basic sequence $(\yy_n)_{n=1}^\infty$ in $Y$.  By Theorem~\ref{BPSP} we can pass to a subsequence if necessary so that $(\yy_n)_{n=1}^\infty$ is $(\sqrt{1+\epsilon})$-equivalent to a normalized block basic sequence $(\yy'_n)_{n=1}^\infty$ of the canonical basis of $g(\ww,p)$. Moreover, if $Z\subseteq [\yy_n']_{n=1}^\infty$ is $C$-complemented in $g(\ww,p)$ and $T$  is the operator defined by
$T(\yy'_n)=\yy_n$,
 then $T(Z)$ is $(\sqrt{1+\epsilon})C$-complemented in 
 $g(\ww,p)$.
  Now we apply Proposition \ref{block-basis-equivalent-lp} to find  a block basic sequence $(\zz'_n)_{n=1}^\infty$ of $(\yy'_n)_{n=1}^\infty$ which is $(\sqrt{1+\epsilon})$-equivalent to the canonical basis $(\ff_n)_{n=1}^\infty$ of $\ell_p$ 
  such that  $[\zz'_n]_{n=1}^\infty$ is $(\sqrt{1+\epsilon})$-complemented in $g(\ww,p)$.  For $n\in\NN$ let
 $\zz_n=T(\zz'_n)$
It is clear that $(\zz_n)_{n=1}^\infty$ is $(\sqrt{1+\epsilon})$-equivalent to $(\zz'_n)_{n=1}^\infty$, and hence $(1+\epsilon)$-equivalent to $(\ff_n)_{n=1}^\infty$.
It follows that $[\zz_n]_{n=1}^\infty$ is a subspace of $Y$ that is $(1+\epsilon)$-isomorphic to $\ell_p$ and 
$(1+\epsilon)$-complemented in $g(\ww,p)$.

 (vi) 
The block basic sequence
$$\yy_n=\frac{\sum_{i=2^{n-1}}^{2^n-1}\gb_i}{\sum_{i=1}^{2^{n-1}} w_i},\quad n\in\NN.$$
is normalized and uniformly null. By Lemma~\ref{ConstantCoefficients}, each $\yy_n$ is minimal. Thus, given $\epsilon>0$, by Theorem~\ref{subsequence-equivalent-lp} we can find a subsequence $(\yy_{n_k})_{k=1}^\infty$ of $(\yy_{n})_{n=1}^{\infty}$ that is $(1+\epsilon)$-equivalent to the canonical $\ell_p$-basis and such that $[\yy_{n_k}]_{k=1}^\infty$ is  $(1+\epsilon)$-complemented in $g(\ww,p)$. Let
$P\colon g(\ww,p)\to [\yy_{n_k}]_{k=1}^\infty$ be a projection with $\Vert P \Vert\le 1+\epsilon$ and  let $S\colon  [\yy_{n_k}]_{k=1}^\infty\to \ell_p$ be given by $S(\yy_{n_k})=\ff_k$. We have $\Vert S \Vert\le 1+\epsilon$.
Pick $(\zz_n)_{n=1}^\infty$ in $d(\ww,p)$ so that
  $\yy_n=I_{d,g}(\zz_n)$. The sequence $(\zz_n)_{n=1}^\infty$
 is also normalized. Then, by the  $d(\ww,p)$-analog of  Proposition~\ref{lp-dominates} (see \cite{ACL73}*{Proposition 5}) $(\zz_{n_k})_{n=1}^\infty$  is $1$-dominated by  the canonical basis of $\ell_p$, that is,  the linear operator 
$T\in \LL( \ell_p, d(\ww,p))$  given by $T(\ff_k)=\zz_{n_k}$ for $k\in\NN$  verifies  $\Vert T\Vert \le 1$. The composition operator $S\circ P \circ I_{d,g} \circ T$ is the identity map  on $\ell_p$.
\end{proof}

The  longed for features of Garling sequence spaces that were advertised in Section~\ref{Sec1}  follow now readily  from Theorem~\ref{main}.
\begin{corollary} Let $\ww\in\WW$ and $1\le p<\infty$. The space 
$g(\ww,p)$ is  complementably homogeneous and  uniformly complementably $\ell_p$-saturated.
\end{corollary}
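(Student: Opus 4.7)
The plan is to read off both conclusions directly from Theorem~\ref{main}, with subsymmetry of $(\gb_n)_{n=1}^\infty$ (Proposition~\ref{subsymmetry}~(iv)) doing the bookkeeping. Neither step requires new constructions; the work has already been done in parts (ii) and (v) of Theorem~\ref{main}.

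For uniform complementable $\ell_p$-saturation, I would simply apply Theorem~\ref{main}~(v) with a fixed value of $\epsilon$, say $\epsilon=1$. This yields, for every infinite-dimensional closed subspace $Y\subseteq g(\ww,p)$, a further subspace $Z\subseteq Y$ that is isomorphic to $\ell_p$ and $2$-complemented in $g(\ww,p)$. The constant $C=2$ is independent of $Y$, which is exactly the defining property of uniform complementable $\ell_p$-saturation.

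For complementable homogeneity, suppose $Y\subseteq g(\ww,p)$ is a closed subspace with $Y\approx g(\ww,p)$, and fix an isomorphism $T\colon g(\ww,p)\to Y$. Set $\yy_n=T(\gb_n)$ for $n\in\NN$; then $(\yy_n)_{n=1}^\infty$ is a basic sequence in $Y$ equivalent to the canonical basis $(\gb_n)_{n=1}^\infty$ of $g(\ww,p)$. By Theorem~\ref{main}~(ii), there exists a subsequence $(\yy_{n_k})_{k=1}^\infty$ whose closed linear span $Z:=[\yy_{n_k}]_{k=1}^\infty$ is complemented in $g(\ww,p)$. Clearly $Z\subseteq Y$. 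To identify $Z$ up to isomorphism I invoke subsymmetry: by Proposition~\ref{subsymmetry}~(iv) the basis $(\gb_n)_{n=1}^\infty$ is $1$-subsymmetric, so $(\gb_{n_k})_{k=1}^\infty\approx(\gb_n)_{n=1}^\infty$, and consequently $(\yy_{n_k})_{k=1}^\infty\approx(\gb_{n_k})_{k=1}^\infty\approx(\gb_n)_{n=1}^\infty$. Combining this with Theorem~\ref{main}~(i), which asserts that $(\gb_n)_{n=1}^\infty$ spans all of $g(\ww,p)$, we conclude $Z\approx g(\ww,p)$, as required.

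I do not anticipate a genuine obstacle: the heavy lifting was already done in the proofs of Theorem~\ref{main}~(ii) (selecting a complemented subsequence via the gliding-hump trick and the $1$-subsymmetric projection formula) and Theorem~\ref{main}~(v) (extracting an $\ell_p$-copy with uniform control via Theorem~\ref{subsequence-equivalent-lp}). The only subtle point worth double-checking is the use of subsymmetry to pass from the subsequence $(\yy_{n_k})_{k=1}^\infty$ back to the full basis $(\gb_n)_{n=1}^\infty$; this is automatic from Proposition~\ref{subsymmetry}~(iv) and requires no further estimates.
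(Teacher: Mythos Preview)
Your proposal is correct and follows exactly the approach the paper intends: the corollary is stated as an immediate consequence of Theorem~\ref{main}, and you have correctly identified that uniform complementable $\ell_p$-saturation comes from part~(v) (with any fixed $\epsilon$) while complementable homogeneity comes from part~(ii) together with the $1$-subsymmetry of $(\gb_n)_{n=1}^\infty$. The paper leaves these details implicit, and your write-up fills them in accurately.
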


\begin{remark} There are $d(\ww,p)$-analogs of Theorem \ref{main}~(ii)  and  Theorem \ref{main}~(v) (cf.   \cite{CL74}*{Corollary 12} and \cite{ACL73}*{Theorem 1}, respectively).
 Then, for $\ww\in\WW$ and $1\le p<\infty$,   $d(\ww,p)$  is also
complementably homogeneous and  uniformly complementably $\ell_p$-saturated.
\end{remark}

\section{Uniqueness of subsymmetric basis in $g(\ww,p)$}\label{sec3}

\noindent  The spaces $g(\ww,p)$ do not have a unique unconditional basis since they are not isomorphic to any of the only three spaces that enjoy that property, namely  $c_{0}$, $\ell_{1}$, or $\ell_{2}$ (see, e.g., \cite{AK2016}*{Theorem  9.3.1}). The aim of this section is to show that they  do have a unique subsymmetric basis.

\begin{theorem}\label{unique-subsymmetric} Let $1\leq p<\infty$ and $\ww \in\WW$.  Suppose $(\yy_n)_{n=1}^\infty$ is a subsymmetric basic sequence in $g(\ww,p)$.  Then every subsymmetric basis for   $Y=[\yy_n]_{n=1}^\infty$ is equivalent to $(\yy_n)_{n=1}^\infty$.  In particular, every subsymmetric basis in $g(\ww,p)$ is equivalent to its canonical basis.\end{theorem}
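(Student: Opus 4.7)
The plan is to establish a dichotomy: every semi-normalized subsymmetric basic sequence in $g(\ww,p)$ is equivalent either to the canonical basis $(\gb_n)_{n=1}^\infty$ or to the canonical $\ell_p$-basis $(\ff_n)_{n=1}^\infty$. Coupled with Theorem~\ref{main}(iv), which asserts $\ell_p\not\cong g(\ww,p)$, this suffices: two subsymmetric bases of the same subspace $Y$ must land in the same class (otherwise $Y$ would be simultaneously isomorphic to $\ell_p$ and to $g(\ww,p)$), hence be mutually equivalent. The ``In particular'' statement follows by taking $Y=g(\ww,p)$ and $(\yy_n)=(\gb_n)_{n=1}^\infty$.

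First I would dispose of the non-weakly-null case: by Proposition~\ref{subsymmetric-weakly-null} the sequence is then equivalent to the canonical $\ell_1$-basis, which by Theorem~\ref{main}(iii) can occur only when $p=1$, and in $g(\ww,1)$ the $\ell_p$-basis $(\ff_n)$ \emph{is} the $\ell_1$-basis. So I may assume $(\yy_n)$ is weakly null. Applying Bessaga--Pelczy\'nski (Theorem~\ref{BPSP}) produces a subsequence $(1+\epsilon)$-equivalent to a normalized block basic sequence $(\uu_n)$ of $(\gb_n)_{n=1}^\infty$; by subsymmetry of $(\yy_n)$, the full sequence is equivalent to $(\uu_n)$, which is thus itself subsymmetric. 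The problem reduces to classifying normalized subsymmetric block basic sequences of $(\gb_n)$.

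By Remark~\ref{dichotomy-block}, after passing to a subsequence (equivalent to the whole by subsymmetry), $(\uu_n)$ is either uniformly null or verifies the gliding hump property. The uniformly null case is handled directly: Theorem~\ref{subsequence-equivalent-lp} together with subsymmetry yields $(\uu_n)\approx(\ff_n)$. In the gliding hump case, Proposition~\ref{subsymmetric-dominates}(ii) gives $(\gb_n)\lesssim(\uu_n)$. To obtain the reverse domination, the key is to establish that $\sup_n|\supp(\uu_n)|<\infty$; once in hand, Proposition~\ref{subsymmetric-dominates}(i), or a direct estimate via \eqref{SupGarling} using $|a_i|\le\|\uu_n\|_g=1$ and the decreasing monotonicity of $\ww$ (so $w_{(k-1)M+1}\le w_k$ whenever $|\supp\uu_n|\le M$), yields $\|\sum b_n\uu_n\|_g^p\le M\|\sum b_n\gb_n\|_g^p$ and hence $(\uu_n)\approx(\gb_n)$.

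The main obstacle is therefore the gliding hump case with unbounded support sizes. My approach would be to replace each $\uu_n$ by a minimal variant (in the sense of the definition preceding Lemma~\ref{NormAttaining}) and to decompose $\uu_n=\uu_n^++\uu_n^-$ according to the threshold $|a_i|\ge\delta$ versus $|a_i|<\delta$. Lemma~\ref{NormAttaining} combined with $\ww\notin\ell_1$ forces $|\supp(\uu_n^+)|\le N(\delta)$ uniformly in $n$, where $N(\delta)$ depends only on the partial sums of $\ww$. Refining $\delta\to 0$ and exploiting the uniformly null sub-block structure that emerges in the ``small'' part (to which Theorem~\ref{subsequence-equivalent-lp} applies), together with the subsymmetry of $(\uu_n)$, one shows that either $(\uu_n)\approx(\ff_n)$ (returning us to the uniformly null regime) or the support is in fact bounded after all. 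In either outcome the dichotomy is established, and the theorem follows as in the opening paragraph.
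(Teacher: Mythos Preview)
Your overall strategy---establishing a dichotomy that every subsymmetric basic sequence in $g(\ww,p)$ is equivalent to $(\gb_n)_{n=1}^\infty$ or to $(\ff_n)_{n=1}^\infty$---is more ambitious than what the theorem requires, and the argument you give for the hard case has a genuine gap. In the gliding hump case with unbounded supports, your decomposition $\uu_n=\uu_n^{+}+\uu_n^{-}$ according to a coefficient threshold $\delta$ does not obviously lead anywhere: the pieces $(\uu_n^{-})$ need not form a block basic sequence of $(\gb_n)$ (their supports are interleaved with those of $\uu_n^{+}$), there is no reason the resulting sequences are themselves subsymmetric, and the final sentence (``one shows that either $(\uu_n)\approx(\ff_n)$\ldots or the support is in fact bounded after all'') is an assertion, not an argument. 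I do not see how to close this, and the dichotomy you are aiming for is not proved anywhere in the paper.

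The paper's proof sidesteps this difficulty entirely by exploiting a feature of the hypothesis you have not used: $(\yy_n)$ and $(\uu_n)$ are both bases \emph{for the same space} $Y$. Assuming $Y\not\cong\ell_p$, neither basis is equivalent to $(\ff_n)$, so after passing to block realizations one finds a block basic sequence $(\uu'_n)$ of $(\yy_n)$ equivalent to $(\uu_n)$ \emph{and} a block basic sequence $(\yy'_n)$ of $(\uu_n)$ equivalent to $(\yy_n)$. Neither can be uniformly null (else Theorem~\ref{subsequence-equivalent-lp} would force equivalence to $(\ff_n)$), so after passing to subsequences both have the gliding hump property, and Proposition~\ref{subsymmetric-dominates}(ii) applied \emph{twice} gives
\[
(\yy_n)\lesssim(\uu'_n)\approx(\uu_n)\lesssim(\yy'_n)\approx(\yy_n).
\]
This two-sided use of the gliding hump---possible precisely because each basis lives inside the span of the other---is the key idea you are missing; it replaces the need for any classification of subsymmetric sequences.
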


\begin{proof}Assume $Y$ is not isomorphic to $\ell_p$, else we are done.  Now let $(\uu_n)_{n=1}^\infty$ be any subsymmetric basis for $Y$. Since
neither $(\yy_n)_{n=1}^\infty$ nor $(\uu_n)_{n=1}^\infty$ are equivalent to the canonical basis of $\ell_1$, 
  Proposition~\ref{subsymmetric-weakly-null}  gives that both  sequences  are weakly null.  Thus, we may apply 
 Theorem~\ref{BPSP} 
   to obtain  $(\uu'_n)_{n=1}^\infty$ a normalized block basic sequence of $(\yy_n)_{n=1}^\infty$ and $(\yy'_n)_{n=1}^\infty$ 
  a normalized block basic sequence of $(\uu_n)_{n=1}^\infty$ with $(\uu'_n)_{n=1}^\infty\approx(\uu_n)_{n=1}^\infty$ and $(\yy'_n)_{n=1}^\infty\approx(\yy_n)_{n=1}^\infty$. 
By Theorem \ref{subsequence-equivalent-lp} neither $(\uu'_n)_{n=1}^\infty$ nor $(\yy'_n)_{n=1}^\infty$ are uniformly null since,
otherwise, either $(\uu_n)_{n=1}^\infty$ or $(\yy_n)_{n=1}^\infty$ would be equivalent to the canonical basis of $\ell_p$.  Combining  subsymmetry
with Remark~\ref{dichotomy-block} it follows that, passing to subsequences if necessary, we may assume that both $(\uu'_n)_{n=1}^\infty$ and $(\yy'_n)_{n=1}^\infty$  verify the gliding hump property. Proposition~\ref{subsymmetric-dominates}~(ii) yields
$$
(\yy_n)_{n=1}^\infty\lesssim(\uu'_n)_{n=1}^\infty\approx(\uu_n)_{n=1}^\infty\lesssim(\yy'_n)_{n=1}^\infty\approx(\yy_n)_{n=1}^\infty.
$$\end{proof}



\begin{remark} It is also easy to check that Theorem~\ref{unique-subsymmetric}  holds as well  for $d(\ww,p)$ in place of $g(\ww,p)$ and so
$d(\ww,p)$ admits a unique subsymmetric basis (which is also symmetric).  \end{remark}

\section{Nonexistence of symmetric basis in $g(\ww, p)$}\label{sec4}

\noindent In 2004, B. Sari  \cite[\S6]{Sa04} proved that there exist Banach spaces, called Tirilman spaces, admitting a subymmetric basis but failing to admit a symmetric one  (in fact, those spaces do not even contain symmetric basic sequences!). The aim of this section is, on the one hand, to determine when $g(\ww,p)$ admits a symmetric basis, and on the other hand, to show that for a wide class  of weights $\ww\in\WW$, the space $g(\ww,p)$ does not have such a basis despite having a unique subsymmetric basis.



\begin{theorem}\label{tfae} Let $1\leq p<\infty$ and $\ww=(w_n)_{n=1}^\infty\in\WW$.
 The following are equivalent.
\begin{enumerate}\item[(i)]  The canonical basis  is a symmetric basis in $g(\ww,p)$.
\item[(ii)] $g(\ww,p)$ admits a symmetric basis.
\item[(iii)]  The canonical bases of  $g(\ww, p)$ and $d(\ww,p)$ are equivalent, that is $g(\ww.p)=d(\ww,p)$ up to an equivalent norm.
\item[(iv)]  $g(\ww,p)$ and $d(\ww,p)$ are isomorphic.
\item[(v)]  $d(\ww,p)$ contains a subspace isomorphic to $g(\ww,p)$.
\item[(vi)] The inclusion map 
 $I_{d,g}\colon  d(\ww,p) \to g(\ww,p)$ 
 preserves a copy of $d(\ww,p)$ that is complemented in $g(\ww,p)$.  More precisely, there exists a subspace $Y$ of $d(\ww,p)$ such that $Y\approx d(\ww,p)$,  $I_{d,g}|_Y$ is bounded below, and  $I_{d,g}(Y)$ is complemented in $g(\ww,p)$.
\item[(vii)]  There is an operator  $T\in \LL(d(\ww,p),g(\ww,p))$
such that $T\circ I_{d,g}$  preserves a copy of $d(\ww,p)$.
\end{enumerate}\end{theorem}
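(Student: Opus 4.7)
The plan is to establish that (i)--(vii) are equivalent by verifying the cycle (ii)$\Rightarrow$(i)$\Leftrightarrow$(iii)$\Rightarrow$(iv)$\Rightarrow$(v)$\Rightarrow$(iii) together with (iii)$\Rightarrow$(vi)$\Rightarrow$(vii)$\Rightarrow$(iii). Several implications are essentially formal: (i)$\Rightarrow$(ii), (iii)$\Rightarrow$(iv)$\Rightarrow$(v), and (iii)$\Rightarrow$(vi) (take $Y=d(\ww,p)$; under (iii) the map $I_{d,g}$ is itself an isomorphism and $I_{d,g}(Y)=g(\ww,p)$ is trivially complemented). For (vi)$\Rightarrow$(vii), compose $(I_{d,g}|_Y)^{-1}$ with the projection onto $I_{d,g}(Y)$ provided by (vi) to obtain an operator $T$ for which $T\circ I_{d,g}$ restricts to the identity on $Y\approx d(\ww,p)$, so it preserves a copy of $d(\ww,p)$.

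The implication (ii)$\Rightarrow$(i) is immediate from Theorem~\ref{unique-subsymmetric}: any symmetric basis is subsymmetric, hence equivalent to $(\gb_n)_{n=1}^\infty$, which must therefore itself be symmetric. For (i)$\Leftrightarrow$(iii), note that (iii) trivially implies (i) since $(\dd_n)_{n=1}^\infty$ is symmetric. Conversely, if $(\gb_n)_{n=1}^\infty$ is $C$-symmetric, then for every $f=(a_n)_{n=1}^\infty\in c_{00}$ and every $\sigma\in\Pi$, taking $\phi=\id$ inside the supremum defining $\|\cdot\|_g$ yields
\[
\left\Vert\sum_{n=1}^\infty a_{\sigma(n)}\gb_n\right\Vert_g\ge\left(\sum_{n=1}^\infty|a_{\sigma(n)}|^p w_n\right)^{1/p}.
\]
Taking the supremum over $\sigma\in\Pi$ on both sides and applying $C$-symmetry to the left-hand side gives $\|f\|_d\le C\|f\|_g$; the reverse $\|f\|_g\le\|f\|_d$ is Proposition~\ref{embeddings}.

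For (v)$\Rightarrow$(iii), assume $Y\subseteq d(\ww,p)$ with $Y\approx g(\ww,p)$ and transport the canonical basis to a sequence $(\yy_n)_{n=1}^\infty$ in $Y$ equivalent to $(\gb_n)_{n=1}^\infty$. This sequence is subsymmetric and weakly null (by Corollary~\ref{garling-weakly-null} for $p>1$, and for $p=1$ one replaces $(\yy_n)_{n=1}^\infty$ by a weakly null sequence with the same closed span using Rosenthal's and James' theorems exactly as in the proof of Theorem~\ref{main}(v)). By Theorem~\ref{BPSP} we may pass to a subsequence and assume $(\yy_n)_{n=1}^\infty$ is a normalized block basic sequence of $(\dd_n)_{n=1}^\infty$. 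Remark~\ref{dichotomy-block} splits the argument in two. The uniformly null case is impossible: by the $d(\ww,p)$-analog of Theorem~\ref{subsequence-equivalent-lp}, classical for Lorentz spaces (see \cite{ACL73}), a subsequence would be equivalent to $(\ff_n)_{n=1}^\infty$, whence by subsymmetry $(\gb_n)_{n=1}^\infty\approx(\ff_n)_{n=1}^\infty$ and $g(\ww,p)\approx\ell_p$, contradicting the strict containment $\ell_p\subsetneq d(\ww,p)\subseteq g(\ww,p)$ of Proposition~\ref{embeddings}. In the gliding hump case, Proposition~\ref{subsymmetric-dominates}(ii) applied inside $d(\ww,p)$ yields $(\dd_n)_{n=1}^\infty\lesssim(\yy_n)_{n=1}^\infty\approx(\gb_n)_{n=1}^\infty$; combined with $(\gb_n)_{n=1}^\infty\lesssim(\dd_n)_{n=1}^\infty$ from Proposition~\ref{embeddings}, this gives $(\gb_n)_{n=1}^\infty\approx(\dd_n)_{n=1}^\infty$, i.e., (iii).

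The main obstacle is (vii)$\Rightarrow$(iii). From the hypothesis one extracts a subspace $Z\subseteq d(\ww,p)$ with $Z\approx d(\ww,p)$ on which the composed map is bounded below, so that in particular $I_{d,g}|_Z$ is bounded below and the $d$- and $g$-norms are equivalent on $Z$. Using BPSP in $d(\ww,p)$ and the symmetry of $(\dd_n)_{n=1}^\infty$, one further reduces to a normalized block basic sequence $(\zz_n)_{n=1}^\infty$ of $(\dd_n)_{n=1}^\infty$ equivalent to $(\dd_n)_{n=1}^\infty$ in $d(\ww,p)$, inheriting the equivalence of norms on its closed span. Remark~\ref{dichotomy-block} combined with the same uniformly-null exclusion as in the preceding paragraph forces us into the gliding hump case. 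The delicate step is to leverage a uniform lower bound on the dominant coefficients $c_{i_n,n}$ of $\zz_n$ together with the coordinate projections of Proposition~\ref{subsymmetry}(i) and the $1$-unconditionality of $(\gb_n)_{n=1}^\infty$ in order to transfer the norm-equivalence from $[\zz_n]_{n=1}^\infty$ to the singleton-support sub-block $[\gb_{i_n}]_{n=1}^\infty$; then the subsymmetry of $(\gb_n)_{n=1}^\infty$ on the $g$-side and the symmetry of $(\dd_n)_{n=1}^\infty$ on the $d$-side extend this equivalence to $(\gb_n)_{n=1}^\infty\approx(\dd_n)_{n=1}^\infty$. The hard part will be orchestrating these three subsymmetric sequences in two different norms while tracking how the bounded-below estimates survive subsequence extraction and gliding hump refinement.
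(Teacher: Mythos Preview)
Your treatment of all implications except (vii)$\Rightarrow$(iii) is correct and follows the paper's argument closely (one quibble: in (v)$\Rightarrow$(iii), Corollary~\ref{garling-weakly-null} already covers $p=1$, so the separate Rosenthal--James detour is unnecessary).

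The genuine gap is in (vii)$\Rightarrow$(iii). Your plan produces a block basic sequence $(\zz_n)$ of $(\dd_n)$ with the gliding hump property, equivalent to $(\dd_n)$ in the $d$-norm, and on whose span the $d$- and $g$-norms are equivalent; you then propose to transfer this norm equivalence to $[\ee_{i_n}]_n$ via coordinate projections. But projections only give $\Vert\sum_n a_n c_{i_n,n}\ee_{i_n}\Vert\le\Vert\sum_n a_n\zz_n\Vert$ (in either norm), so the chain one can legitimately write is
\[
\Bigl\Vert\sum_n a_n\ee_n\Bigr\Vert_d\ \lesssim\ \Bigl\Vert\sum_n a_n\zz_n\Bigr\Vert_d\ \approx\ \Bigl\Vert\sum_n a_n\zz_n\Bigr\Vert_g,
\]
and the missing link $\Vert\sum_n a_n\zz_n\Vert_g\lesssim\Vert\sum_n a_n\ee_n\Vert_g$ is exactly the assertion that $(\zz_n)$ is dominated by $(\gb_n)$ in the $g$-norm, which nothing in your toolkit delivers: the gliding hump yields the \emph{reverse} domination $(\gb_n)\lesssim(\zz_n)$ via Proposition~\ref{subsymmetric-dominates}(ii), and Proposition~\ref{lp-dominates} only gives $(\zz_n)\lesssim(\ff_n)$. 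In effect, after your first sentence you have discarded $T$ and retained merely the fact that $g(\ww,p)$ contains a closed copy of $d(\ww,p)$, which does not imply (iii). The paper's argument is structurally different: it invokes \cite[Theorem~5.5]{KPSTT12} to conclude that $\Vert T(\gb_n)\Vert_d\not\to 0$, extracts a semi-normalized weakly null subsequence $(T(\gb_{n_k}))_k$ \emph{inside $d(\ww,p)$}, and applies Theorem~\ref{BPSP} together with \cite[Corollary~2]{ACL73} there to obtain $(\dd_n)\lesssim(T(\gb_{n_k}))\lesssim(\gb_{n_k})\approx(\gb_n)$. The idea you are missing is to use $T$ to push vectors back into $d(\ww,p)$ and work there, rather than trying to compare the two norms on a common block subspace of $g(\ww,p)$.
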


\begin{proof} The implications (i) $\Rightarrow$ (ii),  (iii) $\Rightarrow$ (i), (iii) $\Rightarrow$ (iv), (iv) $\Rightarrow$ (v), and (iii) $\Rightarrow$ (vi) are all trivial.

(ii) $\Rightarrow$ (i) A symmetric basis  for $g(\ww,p)$  is also subsymmetric and so, by Theorem~\ref{unique-subsymmetric},
equivalent to $(\gb_n)_{n=1}^\infty$. Hence $(\gb_n)_{n=1}^\infty$ is symmetric.

(i) $\Rightarrow$ (iii) There is uniform constant $K$ such that $(\gb_{\sigma(n)})_{n=1}^\infty\approx_K
(\gb_n)_{n=1}^\infty$ for all $\sigma\in\Pi$.
Given $f=(a_n)_{n=1}^\infty\in c_0$ and $\sigma\in\Pi$ we have
\[
\sum_{n=1}^\infty |a_{\sigma(n)}|^p w_n \le \Vert (a_{\sigma(n)})_{n=1}^\infty\Vert_g^p \le K
\Vert  f \Vert_g^p.
\]
Taking the supremum in $\sigma$ we get $\Vert f\Vert_d\le \Vert f\Vert_g$. In other words,
$(\dd_n)_{n=1}^\infty\lesssim_K(\gb_n)_{n=1}^\infty$. The estimate
 $(\gb_n)_{n=1}^\infty\lesssim_1(\dd_n)_{n=1}^\infty$ is a consequence of Proposition~\ref{embeddings}.  

(v) $\Rightarrow$ (iii) Let $(\yy_n)_{n=1}^\infty$ be a basic sequence in $d(\ww,p)$ equivalent to the canonical basis of $g(\ww,p)$. By Corollary~\ref{garling-weakly-null} $(\yy_n)_{n=1}^\infty$ is weakly null. Then, via 
Theorem~\ref{BPSP} 
and passing to a subsequence, we obtain that  $(\yy_n)_{n=1}^\infty$ is equivalent to a (semi-normalized) block basic sequence of $(\dd_n)_{n=1}^\infty$.  By \cite[Corollary 2]{ACL73} and
passing to a further subsequence, we have 
 $(\dd_n)_{n=1}^\infty\lesssim (\yy_n)_{n=1}^\infty$. Since
 $
(\gb_n)_{n=1}^\infty\lesssim_1(\dd_n)_{n=1}^\infty
$
we are done.

(vi) $\Rightarrow$ (vii) Let $P$ be a projection from $g(\ww,p)$ onto $I_{d,g}(Y)$ and $S$ be the inverse map of
$I_{d,g}|_Y$. We have that $S\circ P\circ I_{d,g}$ preserves  a copy of $d(\ww,p)$.

(vii) $\Rightarrow$ (iii) By 
\cite[Theorem 5.5]{KPSTT12},  the sequence  $((T\circ I_{d,g})(\dd_n))_{n=1}^\infty$ does not converges to zero in norm. This means that we can find  a subsequence $(T(\gb_{n_k}))_{k=1}^\infty$ that is semi-normalized.  As $(\gb_n)_{n=1}^\infty$ is weakly null by Corollary~\ref{garling-weakly-null}, so is $(T(\gb_{n_k}))_{k=1}^\infty$.  Pass to a further subsequence if necessary so that, by applying 
Theorem~\ref{BPSP},
$(T(\gb_{n_k}))_{k=1}^\infty$ is (semi-normalized) and equivalent to a block basic sequence of $(\dd_n)_{n=1}^\infty$.  Again by \cite[Corollary 2]{ACL73} we may assume, passing to a further subsequence, that $(T(\gb_{n_k}))_{k=1}^\infty\gtrsim(\dd_n)_{n=1}^\infty$.  Hence,
\[(\dd_n)_{n=1}^\infty\lesssim(T(\gb_{n_k}))_{k=1}^\infty\lesssim(\gb_{n_k})_{k=1}^\infty\approx(\gb_n)_{n=1}^\infty\lesssim(\dd_n)_{n=1}^\infty.\]\end{proof}

\begin{remark}Notice that,  by Theorem~\ref{main}~(viii),  the inclussion map $I_{d,g}\colon  d(\ww,p) \to g(\ww,p)$  is never strictly singular. 
\end{remark}

\begin{remark}
Observe that $g(\ww,p)$ is never isometric in the natural way to $d(\ww,p)$.  Just consider the minimum $k\in\NN$ such that $w_{k+1}<1$, so that, letting
\[0<\alpha<\left(\frac{1-w_{k+1}}{k}\right)^{1/p} \]
and $f=\ee_{k+1} + \alpha \sum_{n=1}^k \ee_n$
we have $\Vert f \Vert_g=1<\Vert f \Vert_d$.
\end{remark}

Before constructing weights for which $(\gb_n)_{n=1}^\infty$ is not symmetric we need a few definitions.

\begin{definition}Let $\ww=(w_n)_{n=1}^\infty$ be a weight.
\begin{enumerate}
\item[(a)] $\ww$ is said to be \textbf{essentially decreasing} if $\sup_{k\le n} w_n/w_k<\infty$.

\item[(b)]   $\ww$  is said to be  \textbf{regular} if
 \[
\sup_m \frac{1}{m w_m} \sum_{n=1}^m w_n<\infty.
 \]
 \item[(c)] The \textbf{conjugate weight} $\ww^*=(w_n^*)_{n=1}^\infty$ is defined by 
\[
w^*_n=\frac{1}{nw_n}, \quad n\in\NN.
\]
\item[(d)]  $\ww$ is said to be \textbf{bi-regular} if both 
 $\ww$ and $\ww^*$ are regular weights.
 \end{enumerate}

\end{definition}

\begin{lemma}\label{SecondRegularLemma}  A weight  $\ww$ is essentially decreasing  if and only if there exists a normalized non-increasing weight $\vv$  such that
$\ww\approx \vv$.
\end{lemma}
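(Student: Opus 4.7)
The plan is to prove the two implications separately; both are short and standard.

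For the easy direction ($\Leftarrow$), suppose $\vv=(v_n)_{n=1}^\infty$ is a normalized non-increasing weight with $\ww\approx \vv$, and let $C\ge 1$ be a constant such that $w_n\le C v_n$ and $v_n\le C w_n$ for all $n\in\NN$. If $k\le n$, then the monotonicity of $\vv$ gives $v_n\le v_k$, hence
\[
\frac{w_n}{w_k} \le \frac{C v_n}{w_k} \le \frac{C v_k}{w_k}\le C^2,
\]
which shows $\sup_{k\le n} w_n/w_k \le C^2<\infty$, so $\ww$ is essentially decreasing.

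For the forward direction ($\Rightarrow$), assume $C:=\sup_{k\le n} w_n/w_k <\infty$. Applied with $k=1$, this bound shows $M:=\sup_{k\ge 1} w_k\le C w_1<\infty$. I will regularize $\ww$ by defining
\[
v_n:=\frac{1}{M}\sup_{k\ge n} w_k,\qquad n\in\NN.
\]
Then $v_1=1$; the sequence $\vv=(v_n)_{n=1}^\infty$ is non-increasing, since the supremum is taken over a decreasing family of sets; and $v_n\ge w_n/M>0$, so $\vv$ is a bonafide normalized non-increasing weight. To establish $\ww\approx\vv$, note that for any $k\ge n$ the essential-decrease hypothesis gives $w_k\le C w_n$, whence $\sup_{k\ge n} w_k \le C w_n$, while trivially $\sup_{k\ge n} w_k \ge w_n$. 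Dividing by $M$,
\[
\frac{1}{M}\, w_n\le v_n \le \frac{C}{M}\, w_n,\qquad n\in\NN,
\]
which is the equivalence $\ww\approx\vv$.

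There is no real obstacle in this argument; the only mild care needed is to verify that the normalizing constant $M$ is finite (which itself follows from the hypothesis) and that the regularized sequence stays strictly positive. The lemma is a routine regularization statement.
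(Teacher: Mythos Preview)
Your proof is correct and follows essentially the same regularization idea as the paper. The only cosmetic difference is in the explicit choice of $\vv$ for the forward direction: the paper takes $v_n=\bigl(\inf_{k\le n} w_k\bigr)/w_1$, the running infimum of the past terms, whereas you take $v_n=\bigl(\sup_{k\ge n} w_k\bigr)/M$, the running supremum of the future terms; both envelopes yield a normalized non-increasing weight equivalent to~$\ww$ by the same one-line estimate.
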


\begin{proof} If  $(w_n)_{n=1}^\infty$ is   an essentially decreasing weight then the weight  $\vv=(v_n)_{n=1}^\infty$ given by 
$v_n=(\inf_{k\le n} w_k)/w_1$ does the trick. The converse is (also) trivial.  \end{proof}

\begin{lemma}\label{ThirdRegularLemma}  If a  weight $\ww$ is regular then  $\ww\notin \ell_1$ and $\ww^*\in c_0$. \end{lemma}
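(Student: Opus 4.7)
The plan is to extract both conclusions directly from the regularity inequality, which, after unwinding the definition, asserts the existence of a constant $C$ such that
\[
\sum_{n=1}^m w_n \le C\, m\, w_m, \qquad m\in\NN.
\]
This single inequality, rearranged in two different ways, will yield the two desired conclusions.

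First, I would prove $\ww\notin\ell_1$. Dividing by $Cm$ in the regularity bound and using the trivial lower estimate $\sum_{n=1}^m w_n \ge w_1$ gives
\[
w_m \ge \frac{1}{Cm}\sum_{n=1}^m w_n \ge \frac{w_1}{Cm}, \qquad m\in\NN.
\]
Summing on $m$ and comparing with the harmonic series then yields $\sum_{m=1}^\infty w_m = \infty$, so $\ww\notin\ell_1$.

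Second, I would prove $\ww^*\in c_0$, i.e., $mw_m\to\infty$. Rewriting the regularity inequality as
\[
m w_m \ge \frac{1}{C}\sum_{n=1}^m w_n,
\]
and observing that the right-hand side tends to $+\infty$ by the first step, we conclude $mw_m\to\infty$, whence $w_m^* = 1/(mw_m)\to 0$, as required.

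The argument is really only two lines of manipulation of the defining inequality, so there is no genuine obstacle; the only thing to be mildly careful about is that the lower bound $\sum_{n=1}^m w_n \ge w_1$ needs no monotonicity assumption on $\ww$, so the proof goes through whether or not $\ww$ is assumed nonincreasing.
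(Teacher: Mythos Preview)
Your proof is correct and follows essentially the same approach as the paper: both arguments hinge on the single rearrangement $m w_m \ge C^{-1}\sum_{n=1}^m w_n$ of the regularity condition, combined with the trivial lower bound $\sum_{n=1}^m w_n \ge w_1$. The only difference is cosmetic---the paper phrases both parts as proofs by contradiction, whereas you argue directly---but the substance is identical.
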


\begin{proof}   Assume, by contradiction, that $\ww=(w_n)_{n=1}^\infty \in \ell_1$, that is, $\sum_{n=1}^{\infty}w_n\approx 1$. Then, $1/m\lesssim w_m$ for  $m\in\NN$, and so $\ww\notin\ell_1$.
Now assume, again by contradiction, that $\ww^*=(w_n)_{n=1}^\infty \notin c_0$. Then there is an infinite subset $A\subseteq\NN$ with $n w_n\approx 1$ for $n\in A$. Consequently, 
$\sum_{n=1}^m w_n \lesssim 1$ for $m\in A$ and so $\ww\in\ell_1$.
\end{proof}

\begin{lemma}\label{FirstRegularLemma} Let  $\ww=(w_n)_{n=1}^\infty$ be an essentially decreasing regular weight. Then: 
\begin{enumerate}
\item[(i)]$mw_m \approx \sum_{n=1}^m w_n$ for $m\in\NN$.
\item[(ii)] $\ww^*$  also is essentially decreasing.
\end{enumerate}
\end{lemma}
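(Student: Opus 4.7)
The plan is to prove both parts by directly combining the two hypotheses (essentially decreasing plus regular) and unwinding the definition of $\ww^*$.

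For (i), the upper bound $\sum_{n=1}^{m} w_n \lesssim m w_m$ is nothing but the regularity hypothesis rewritten. For the lower bound, I would use the essentially decreasing property: there is a constant $K$ such that $w_n \le K w_k$ whenever $k \le n$. Fixing $m \in \NN$ and applying this with $n = m$ to each $k \in \{1, \dots, m\}$ yields
\[
m w_m = \sum_{k=1}^{m} w_m \le K \sum_{k=1}^{m} w_k,
\]
which gives $m w_m \lesssim \sum_{n=1}^{m} w_n$. Combining the two inequalities produces the equivalence $m w_m \approx \sum_{n=1}^{m} w_n$.

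For (ii), I would just rewrite the ratio $w_n^{*}/w_k^{*}$ in terms of $\ww$ and apply (i). Explicitly, for $k \le n$,
\[
\frac{w_n^{*}}{w_k^{*}} = \frac{k w_k}{n w_n} \approx \frac{\sum_{j=1}^{k} w_j}{\sum_{j=1}^{n} w_j} \le 1,
\]
where the equivalence uses part (i) (with its constants absorbed into $\lesssim$) and the final inequality uses that $w_j > 0$ and $k \le n$. Taking the supremum over all pairs $k \le n$ shows that $\sup_{k \le n} w_n^{*}/w_k^{*} < \infty$, i.e., $\ww^{*}$ is essentially decreasing.

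There is no real obstacle in this lemma; both parts are short bookkeeping arguments, and the only care needed is to record the direction of the essentially decreasing inequality (a bound of $w_n$ by $w_k$ for $k \le n$, not the other way round) so that the telescoping-style step $m w_m \le K \sum_{k=1}^m w_k$ in (i) is oriented correctly. Everything else follows by substitution.
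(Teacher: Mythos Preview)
Your proof is correct and follows essentially the same route as the paper: for (i) you combine the regularity bound with the essentially-decreasing bound exactly as the paper does, and for (ii) your direct estimate of $w_n^{*}/w_k^{*}$ via part (i) is just a rephrasing of the paper's observation that $\ww^{*}\approx (1/\sum_{j=1}^m w_j)_{m=1}^\infty$, which is a decreasing sequence.
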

\begin{proof} (i) Let $C=\sup_{k\le n} w_n/ w_k$.  For any $m\in\NN$ we have
 \[
 m w_m \le  C\sum_{n=1}^m w_n.
 \]
   
   (ii)  By  Part~(i), $\ww^{*}\approx (1/(\sum_{n=1}^m w_n))_{m=1}^\infty$.  We infer from Lemma~\ref{ThirdRegularLemma}  that $\ww^*$ is  essentially decreasing.
\end{proof}

\begin{lemma}\label{BiRegularResult} A bi-regular weight is essentially decreasing if and only if
$mw_m \lesssim \sum_{n=1}^m w_n$ for $m\in\NN$.
\end{lemma}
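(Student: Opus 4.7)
My plan is to prove the two implications separately. The forward implication will follow at once from Lemma~\ref{FirstRegularLemma}(i): under the hypothesis that $\ww$ is regular and essentially decreasing, that lemma yields $mw_m \approx \sum_{n=1}^m w_n$, so in particular $mw_m \lesssim \sum_{n=1}^m w_n$ for every $m\in\NN$.

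For the converse, I will assume $\ww=(w_n)_{n=1}^\infty$ is bi-regular with $mw_m \lesssim s_m$ for all $m\in\NN$, where $s_m := \sum_{n=1}^m w_n$, and prove that $\ww$ is essentially decreasing. Pairing the hypothesis with the regularity of $\ww$, which gives $s_m \lesssim mw_m$, yields $w_m \approx s_m/m$. Substituting this into the regularity condition for $\ww^*$, namely $\sum_{j=1}^m \frac{1}{jw_j} \lesssim \frac{1}{w_m}$, produces the key inequality
\[
\sum_{j=1}^m \frac{1}{s_j} \lesssim \frac{m}{s_m}, \quad m \in \NN.
\]
From this I will deduce that $s_n/s_k \lesssim n/k$ for $k \le n$, which translates via $w_n \approx s_n/n$ into $w_n \lesssim w_k$ for $k\le n$; this is precisely the essential decreasingness of $\ww$.

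The main obstacle will be extracting the growth estimate $s_n/s_k \lesssim n/k$ from the sum bound, for which I plan a discrete Gronwall-type argument. Setting $a_m := \sum_{j=1}^m 1/s_j$, the key inequality reads $a_m \le Cm/s_m$ for some constant $C$, and since $a_m - a_{m-1} = 1/s_m$ this is equivalent to $a_m - a_{m-1} \ge a_m/(Cm)$, hence $a_{m-1} \le a_m\left(1 - \frac{1}{Cm}\right)$. Iterating from $k$ to $n$ and taking logarithms, the comparison $-\log(1-x)\ge x$ together with $\sum_{m=k+1}^n 1/m \gtrsim \log(n/k)$ will produce $a_n/a_k \gtrsim (n/k)^{1/C}$. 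Combining this with the trivial bound $a_k \ge k/s_k$ and the hypothesis $a_n \le Cn/s_n$ yields
\[
\frac{Cn}{s_n} \ge a_n \gtrsim \frac{k}{s_k}\left(\frac{n}{k}\right)^{1/C},
\]
which rearranges to $s_n/s_k \lesssim (n/k)^{1-1/C} \le n/k$ for $k \le n$ (the final inequality using that $C>1$ necessarily, since $s_m\to\infty$ forces $1/C<1$), closing the argument.
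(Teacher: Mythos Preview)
Your proof is correct, but the paper takes a much shorter route for the converse. After obtaining $mw_m\approx s_m$ (exactly as you do), the paper observes directly that
\[
w_m^*=\frac{1}{mw_m}\approx\frac{1}{s_m},
\]
and since $(1/s_m)_{m=1}^\infty$ is genuinely decreasing, $\ww^*$ is essentially decreasing. Now $\ww^*$ is an essentially decreasing \emph{regular} weight (by bi-regularity), so Lemma~\ref{FirstRegularLemma}(ii) applied to $\ww^*$ gives that $(\ww^*)^*=\ww$ is essentially decreasing. This two-line duality trick replaces your entire Gronwall argument.

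Your approach is genuinely different: rather than passing through the conjugate weight and the involution $\ww^{**}=\ww$, you stay on the ``$\ww$ side'' and translate bi-regularity into the summability bound $\sum_{j\le m}1/s_j\lesssim m/s_m$, then extract a growth estimate on $s_n/s_k$ by an iterative inequality. This is more laborious but has two compensating virtues: it is self-contained (no appeal back to Lemma~\ref{FirstRegularLemma}), and it actually yields the sharper conclusion $s_n/s_k\lesssim(n/k)^{1-1/C}$ for some $C>1$, i.e.\ a power-type upper regularity of $(s_m)$, which is stronger than the bare $s_n/s_k\lesssim n/k$ you need. A minor quibble: the estimate $\sum_{m=k+1}^n 1/m\gtrsim\log(n/k)$ should really be read as $\sum_{m=k+1}^n 1/m\ge\log(n/k)-\log 2$, which after exponentiation costs only an absolute constant; and the justification ``$C>1$'' is not actually needed for the final inequality $(n/k)^{1-1/C}\le n/k$ (this holds for any $C>0$), though $C>1$ is indeed forced by the setup and is what makes the exponent $1-1/C$ positive.
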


\begin{proof}The ``only if'' part is a consequence of Lemma~\ref{FirstRegularLemma}. Assume that 
$mw_m \approx \sum_{n=1}^m w_n$ for $m\in\NN$. By Lemma~\ref{ThirdRegularLemma}
$\ww^*$  is essentially decreasing. 
Then, by Lemma~\ref{FirstRegularLemma}, $\ww=\ww^{**}$ is essentially decreasing.
\end{proof}

 \begin{proposition}\label{SecondBiRegularResult}  Let $\ww=(w_n)_{n=1}^\infty$ be a weight. The following are equivalent:
 \begin{itemize}
 \item[(i)] $\ww$ is  essentially decreasing  and bi-regular;

  \item[(ii)] $\ww$ is    bi-regular and $\ww^*$ is essentially decreasing;
 
   \item[(iii)]  $\ww$ is essentially decreasing and
\begin{equation}\label{eq:2}
\sup_m \sum_{n=1}^m  \frac{w_{m+1-n}}{n w_n}<\infty.
\end{equation} 

  \item[(iv)]  $\ww^*$ is essentially decreasing  and  \eqref{eq:2} holds.
  \end{itemize}
\end{proposition}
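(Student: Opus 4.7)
My plan is to exploit the self-duality of the four conditions under the involution $\ww\leftrightarrow\ww^*$. Using $w_n=1/(nw_n^*)$, a direct calculation gives
\[
\sum_{n=1}^m\frac{w_{m+1-n}}{nw_n}
=\sum_{n=1}^m\frac{w_n^*}{(m+1-n)w_{m+1-n}^*}
=\sum_{k=1}^m\frac{w_{m+1-k}^*}{kw_k^*},
\]
so the sum in \eqref{eq:2} is invariant under $\ww\leftrightarrow\ww^*$. Since $\ww^{**}=\ww$ and bi-regularity is self-dual, (i) for $\ww$ coincides with (ii) for $\ww^*$ and (iii) for $\ww$ coincides with (iv) for $\ww^*$. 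Thus it is enough to prove (i)$\Leftrightarrow$(ii) and (i)$\Leftrightarrow$(iii), and then (ii)$\Leftrightarrow$(iv) drops out by applying (i)$\Leftrightarrow$(iii) to $\ww^*$ in place of $\ww$.

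The equivalence (i)$\Leftrightarrow$(ii) is immediate from Lemma~\ref{FirstRegularLemma}(ii). In the forward direction, essential decreasing together with regularity of $\ww$ yields $\ww^*$ essentially decreasing, and bi-regularity is untouched. Conversely, if $\ww$ is bi-regular and $\ww^*$ is essentially decreasing, then $\ww^*$ is essentially decreasing and regular (by bi-regularity), so the same lemma applied to $\ww^*$ delivers $\ww^{**}=\ww$ essentially decreasing.

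For (i)$\Rightarrow$(iii) I would split $S_m=T_m+U_m$ at $n=\lfloor m/2\rfloor$. On $n\le\lfloor m/2\rfloor$, essential decreasing gives $w_{m+1-n}\lesssim w_{\lfloor m/2\rfloor}$ because $\lfloor m/2\rfloor\le m+1-n$, while regularity of $\ww^*$ applied at level $\lfloor m/2\rfloor$ gives $\sum_{n\le\lfloor m/2\rfloor}1/(nw_n)\lesssim 1/w_{\lfloor m/2\rfloor}$, so $T_m\lesssim 1$. On $n>\lfloor m/2\rfloor$, essential decreasing gives $1/w_n\lesssim 1/w_m$ and $1/n\le 2/m$; after reindexing $k=m+1-n$, the block becomes $\lesssim (mw_m)^{-1}\sum_{k=1}^{\lceil m/2\rceil}w_k$, which is $\lesssim 1$ by regularity of $\ww$.

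For (iii)$\Rightarrow$(i), essential decreasing is hypothesized, and regularity of $\ww^*$ is quick: essential decreasing gives $w_{m+1-n}\ge w_m/C'$ for every $n\le m$, so $w_m\sum_{n=1}^m 1/(nw_n)\lesssim S_m\lesssim 1$, which is exactly $\sum_n w_n^*\lesssim mw_m^*$. The more delicate step is regularity of $\ww$, and here the maneuver I have in mind is to apply the hypothesis to $S_{2m}$ rather than $S_m$. For $n\in[1,m]$ the companion index $2m+1-n$ lies in $[m+1,2m]$, so essential decreasing forces $w_{2m+1-n}\lesssim w_m$. Restricting the identity $S_{2m}=\sum_{n=1}^{2m}w_n/((2m+1-n)w_{2m+1-n})$ to $n\le m$ and using also $2m+1-n\le 2m$ produces
\[
\sum_{n=1}^m w_n\lesssim mw_m\cdot S_{2m}\lesssim mw_m,
\]
which is regularity of $\ww$. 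I expect this last step to be the main obstacle: without the shift $m\mapsto 2m$, the analogous lower bound on $S_m$ only controls $\sum_{n\le m/2}w_n$ by $mw_{m/2}$, which falls short of regularity; doubling the index is precisely what forces $w_{2m+1-n}$ to be comparable to $w_m$ uniformly on the full first half $n\le m$ and allows the estimate to collapse cleanly onto $\sum_{n\le m}w_n\lesssim mw_m$.
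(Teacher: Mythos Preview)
Your proof is correct and follows the same overall architecture as the paper: exploit the self-duality of the sum in \eqref{eq:2} to reduce to (i)$\Leftrightarrow$(ii) and (i)$\Leftrightarrow$(iii), handle (i)$\Leftrightarrow$(ii) via Lemma~\ref{FirstRegularLemma}(ii), and split the sum near the midpoint for (i)$\Rightarrow$(iii).

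The one genuinely different ingredient is your treatment of the implication (iii)$\Rightarrow$(i), specifically the regularity of $\ww$. The paper does not go straight for it: it first proves that $\ww^*$ is essentially decreasing by restricting the sum $S_{n+k-1}$ to the range $k\le j\le n+k-1$ and comparing with $\sum_{j=k}^{n+k-1}1/j\ge\log 2$, obtaining $w_k^*\lesssim w_n^*$ for $n\le k$; only then does it use this new fact, in the form $1/(mw_m)\lesssim 1/(nw_n)$ for $n\le m$, to turn $\sum_{n\le m}w_{m+1-n}/(mw_m)$ into $S_m$. Your doubling trick $m\mapsto 2m$ sidesteps this auxiliary step entirely: by passing to $S_{2m}$ and keeping only the terms with $n\le m$, the companion indices $2m+1-n$ all lie in $[m+1,2m]$, so the single hypothesis that $\ww$ is essentially decreasing already yields $w_{2m+1-n}\lesssim w_m$ and $(2m+1-n)\le 2m$, giving $\sum_{n\le m}w_n\lesssim mw_m\,S_{2m}$ directly. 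This is shorter and avoids the $\log 2$ computation; the paper's route, on the other hand, yields the extra information that $\ww^*$ is essentially decreasing as a byproduct (though of course this follows a posteriori from (i)$\Rightarrow$(ii) once (i) is established).
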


\begin{proof} (i) $\Rightarrow$ (ii) is straightforward from Lemma~\ref{FirstRegularLemma} (ii),
 while  (ii) $\Rightarrow$ (i) is a consequence of 
  (i) $\Rightarrow$ (ii)  and the fact that $\ww^{**}=\ww$.
  
  (i) $\Rightarrow$ (iii) Let  $\ww^*=(w_n^*)_{n=1}^\infty$ and put 
\begin{align*}
C&=\sup_{k\ge n} \frac{w_k}{w_n}<\infty, \quad D=\sup_m w_m^* \sum_{n=1}^m w_n<\infty,\\
C^*&=\sup_{k\ge n} \frac{w_k^*}{w_n^*}<\infty, \text{ and } \quad D^*=\sup_m w_m \sum_{n=1}^m w_n^*<\infty. 
\end{align*}
Let $m\in\NN$ and choose $r\in\NN$ such that $2r-1\le m\le 2r$.
 We have
 \begin{align*}
 \sum_{n=1}^m  \frac{w_{m+1-n}}{n w_n} &=\sum_{n=1}^r w_{m+1-n} w_n^*+\sum_{n=r+1}^m w_{m+1-n} w_n^* \\
  &\le C w_{r} \sum_{n=1}^r  w_n^*+ C^* w_r^*\sum_{n=r+1}^m w_{m+1-n} \\
   &\le C w_{r} \sum_{n=1}^r  w_n^*+C^* w_r^*\sum_{n=1}^r w_n \\
    &=CD^*+C^* D.
  \end{align*}
as desired.

 (iii) $\Rightarrow$ (i)  Let $\ww^*=(w_n^*)_{n=1}^\infty$  and put
 \[
 C=\sup_{k\ge n} \frac{w_k}{w_n}<\infty  \text{ and } D=\sup_m w_m^* \sum_{n=1}^m w_n<\infty.
\]
 Let us prove that $\ww^*$  is   essentially decreasing.
 Let $k$ and $n$ be  integers with $1\le n\le k$. 
 Since the function $\eta$ given by $\eta(x):=\log(1+x)/x$ is decreasing on $(0,\infty)$,
 \begin{align*}
D&\ge  \sum_{j=1}^{n+k-1}   \frac{ w_{n+k-j}}{j w_j}
 \ge \sum_{j=k}^{n+k-1} \frac{ w_{n+k-j}}{jw_j}
\ge \frac{1}{C^2} \frac{ w_n}{w_k}  \sum_{j=k}^{n+k-1} \frac{1}{j}\\
&\ge \frac{1}{C^2} \frac{ w_n}{w_k}\int_k^{n+k} \frac{dx}{x} 
= \frac{1}{C^2}  \eta(n/k) \frac{w_k^*}{w_n^*} 
\ge \frac{1}{C^2}  \eta(1) \frac{w_k^*}{w_n^*}. 
\end{align*}
Hence  
\[
w_k^*\le (\log 2)^{-1} D  C^2   w_n^*, \quad n\le k.
\]
Now, for any $m\in\NN$,
\[
\frac{1}{m w_m} \sum_{n=1}^m w_n
=\frac{1}{m w_m} \sum_{n=1}^m w_{m+1-n}
\le  \frac{D C^2 }{\log 2}   \sum_{n=1}^m \frac{ w_{m+1-n}}{n w_n}\le \frac{D^2 C^2}{\log 2}\]
and
\[\frac{1}{m w_m^*} \sum_{n=1}^m w_n^*
=w_m  \sum_{n=1}^m \frac{1}{nw_n}
\le C  \sum_{n=1}^m \frac{ w_{m+1-n}}{n w_n}
\le DC.
\]
 
The equivalence (ii) $\Leftrightarrow$ (iv) is a consequence of  (i) $\Leftrightarrow$ (iii) and the simple  fact that
\[
  \sum_{n=1}^m \frac{ w_{m+1-n}}{n w_n}=  \sum_{n=1}^m \frac{ w_{m+1-n}^*}{n w_n^*},
 \]where  $\ww^*=(w_n^*)_{n=1}^\infty$. \end{proof}

\begin{theorem}\label{NonSymmetricResult}Let $\ww$ be a normalized non-increasing  bi-regular weight and $1\le p<\infty$. Then $\ww\in\WW$ and the canonical basis is not  a symmetric basis for $g(\ww,p)$.
\end{theorem}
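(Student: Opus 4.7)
The plan is to argue by contradiction via the equivalence (i)$\Leftrightarrow$(iii) of Theorem~\ref{tfae}. The assertion $\ww\in\WW$ follows from two applications of Lemma~\ref{ThirdRegularLemma}: once to the regular weight $\ww$ giving $\ww\notin\ell_1$, and once to the regular weight $\ww^*$ giving $\ww=\ww^{**}\in c_0$. Assume now, for a contradiction, that $(\gb_n)_{n=1}^\infty$ is symmetric; Theorem~\ref{tfae} then yields a constant $K$ with $\Vert f\Vert_d\le K\Vert f\Vert_g$ for every $f$. I will contradict this by exhibiting a sequence of finitely supported vectors whose Lorentz-to-Garling norm ratio tends to infinity. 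The natural candidate is
$$f_N=\sum_{n=1}^N (w_{N+1-n}^*)^{1/p}\gb_n, \qquad N\in\NN.$$
Since Lemma~\ref{FirstRegularLemma}(ii) ensures that $\ww^*$ is essentially decreasing, the coefficients of $f_N$ are essentially increasing; hence a free permutation driving $\Vert\cdot\Vert_d$ can move the large coefficients to the small-index positions, whereas an increasing function $\phi$ driving $\Vert\cdot\Vert_g$ cannot.

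For the Lorentz lower bound I test with the reversal $\sigma(n)=N+1-n$, which immediately gives
$$\Vert f_N\Vert_d^p\ge\sum_{n=1}^N w_n^*\,w_n=\sum_{n=1}^N\frac{1}{n}\longrightarrow\infty.$$
For the Garling upper bound I use formula~\eqref{SupGarling}: any $\phi\in\OO_f$ contributing nontrivially to the sup has range $\{s_1<\cdots<s_r\}\subseteq\{1,\ldots,N\}$ with $s_j\ge j$. Substituting $\tilde m_k=N+1-s_{r+1-k}$ turns the sum into
$$\sum_{j=1}^{r}|a_{\phi(j)}|^p w_j=\sum_{k=1}^r w_{r+1-k}\,w^*_{\tilde m_k},$$
where $(\tilde m_k)_{k=1}^r$ is a strictly increasing sequence in $\{1,\ldots,N\}$ satisfying $\tilde m_k\ge k$. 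The essential monotonicity of $\ww^*$ then yields $w^*_{\tilde m_k}\lesssim w^*_k$, so
$$\sum_{j=1}^{r}|a_{\phi(j)}|^p w_j\lesssim \sum_{k=1}^r \frac{w_{r+1-k}}{k\,w_k},$$
and this last quantity is bounded independently of $r$ by condition (iii) of Proposition~\ref{SecondBiRegularResult}. Taking the supremum over $\phi$ gives $\sup_N\Vert f_N\Vert_g<\infty$, contradicting $\Vert\cdot\Vert_d\le K\Vert\cdot\Vert_g$ and so proving that $(\gb_n)_{n=1}^\infty$ is not a symmetric basis.

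The only substantive step is the Garling estimate: the reindexing $\tilde m_k=N+1-s_{r+1-k}$ is what converts the constraint $s_j\ge j$ (forced by $\phi$ being increasing) into $\tilde m_k\ge k$, which is precisely what is needed to trigger the bi-regular convolution bound of Proposition~\ref{SecondBiRegularResult}(iii). This is exactly the place where the asymmetry between arbitrary permutations governing $\Vert\cdot\Vert_d$ and increasing functions governing $\Vert\cdot\Vert_g$ is exploited.
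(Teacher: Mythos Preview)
Your proof is correct and follows essentially the same route as the paper's. Your test vector $f_N$ is exactly the paper's $g^{(r)}$ (with $r=N$), and your Garling upper bound via the reindexing $\tilde m_k=N+1-s_{r+1-k}$ together with the essential monotonicity of $\ww^*$ and Proposition~\ref{SecondBiRegularResult}(iii) is the same computation the paper carries out. The only cosmetic difference is in the framing: the paper works directly in $g(\ww,p)$, comparing $\Vert g^{(r)}\Vert_g$ with the $g$-norm of its decreasing rearrangement $f^{(r)}$ (for which $\Vert f^{(r)}\Vert_g^p\ge\sum_{n=1}^r 1/n$ by taking $\phi=\id$), whereas you invoke Theorem~\ref{tfae} first and then compare $\Vert f_N\Vert_g$ with $\Vert f_N\Vert_d$; the lower-bound calculation $\sum_{n=1}^N w_n w_n^*=\sum_{n=1}^N 1/n$ is literally the same in both. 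One small expository slip: in your closing paragraph you attribute $\tilde m_k\ge k$ to the constraint $s_j\ge j$, but it actually follows from the fact that $\tilde m_1<\cdots<\tilde m_r$ are distinct positive integers (equivalently, from $s_{r+1-k}\le N-(k-1)$, which uses that the range of $\phi$ sits inside $\{1,\dots,N\}$); the main body of your argument states this correctly.
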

\begin{proof}The  fact  that $\ww\in\WW$ is straightforward from Lemma~\ref{ThirdRegularLemma}.
For each $r\in \NN$, consider sequences $f^{(r)}=(a_{r,n})_{n=1}^\infty$ and
 $g^{(r)}=(b_{r,n})_{n=1}^\infty$ given by 
\[
\begin{cases}
a_{r,n} =b_{r,n}=0  & \text{ if } n>r, \\
a_{r,n} =b_{r,m+1-n}=(nw_n)^{-1/p}  & \text{ if } n\le r.
\end{cases}
\]
Notice that $g^{(r)}$ is a rearrangement of $f^{(r)}$. We have
\[
\sup_{r\in\NN} \Vert f^{(r)}\Vert_g
\ge \sup_{r\in\NN}  \Vert f^{(r)} \Vert_p
=\sup_{r\in\NN}\left(\sum_{n=1}^r \frac{1}{n}\right)^{1/p}=\infty.
\]
 By Lemma~\ref{FirstRegularLemma} (ii), $\ww^*=(w_n^*)_{n=1}^\infty$ is an essentially decreasing sequence.
 Let $r\in\NN$ and $\phi\in\OO$. Denote by $m(r,\phi)$ the largest integer $m$
such that $\phi(m)\le r$. We have   $\phi(n)\le n+r-m$ for $1\le n\le m$. Therefore,
\begin{align*}
\sup_{r\in\NN}\Vert  g^{(r)} \Vert^p_g&=\sup_{r\in\NN,\,\phi\in\OO}\sum_{n=1}^\infty |b_{r,\phi(n)}|^p w_n\\
& =\sup_{r\in\NN,\,\phi\in\OO} \sum_{n=1}^{m(r,\phi)}  w_n w_{r+1-\phi(n)}^*\\
&\lesssim\sup_{r\in\NN,\,\phi\in\OO}  \sum_{n=1}^{m(r,\phi)}  w_n w_{m(r,\phi)+1-n}^*\\
&= \sup_{m\in\NN}  \sum_{n=1}^{m}  w_n w_{m+1-n}^*\\
&=\sup_{m\in\NN} \sum_{n=1}^m \frac{w_{m+1-n}}{n w_n}.
\end{align*}
 Proposition~\ref{SecondBiRegularResult} yields that the canonical basis is not symmetric.
\end{proof}
 
 To give relief to Theorem~\ref{NonSymmetricResult} we need to exhibit examples of  normalized non-increasing bi-regular weights. To that end,
 in light of Lemma~\ref{ThirdRegularLemma}, if suffices to give examples of essentially decreasing bi-regular weights.
 Next, we give a very general procedure for constructing this kind of weights. For instance, we will prove that, for $0<a<1$ and $b\in\RR$,   $((\log(1+n))^b n^{-a})_{n=1}^\infty$  is a essentially decreasing bi-regular weight.
 
\begin{definition}A weight $(u_n)_{n=1}^\infty$ is said to be \textbf{asymptotically constant} if 
\[
\lim_j \sup_{2^{j-1} \le k, n \le 2^j} \frac{u_n} { u_k} =1.
\]
\end{definition}
 
 \begin{proposition}Let $0<a<1$ and $(u_n)_{n=1}^\infty$ be an asymptotically constant weight. Then $\ww=(n^{-a}u_n)_{n=1}^\infty$ is 
 an essentially decreasing bi-regular weight. \end{proposition}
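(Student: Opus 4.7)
The plan is to first extract a quantitative consequence of the asymptotic constancy hypothesis on $(u_n)$, and then use that to verify the three conditions defining an essentially decreasing bi-regular weight $\ww = (n^{-a}u_n)_{n=1}^\infty$.

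The key lemma I would establish first is a two-sided polynomial estimate: for every $\delta>0$ one has $u_n/u_k \lesssim (n/k)^\delta$ uniformly in $n\ge k\ge 1$. To prove it, set $c_j = \sup\{u_n/u_k : 2^{j-1}\le n,k\le 2^j\}$, so $c_j\ge 1$ and $c_j\to 1$. For $n\ge k$ located in dyadic blocks indexed by $I\ge J$, chaining the estimate $u_m/u_{m'}\le c_j$ across the endpoints $2^{J-1},2^J,\ldots,2^I$ yields $u_n/u_k\le \prod_{j=J}^I c_j$. Given $\varepsilon$ with $\log_2(1+\varepsilon)=\delta$, pick $J_0$ with $c_j\le 1+\varepsilon$ for $j\ge J_0$; then for $k\ge 2^{J_0}$ the product is at most $(1+\varepsilon)^{I-J+1}\lesssim (n/k)^\delta$, while the finite range $k<2^{J_0}$ is absorbed by noting that $1/u_k$ is bounded there and the same chaining still gives $u_n\lesssim n^\delta$. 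This upgrade from purely local (``within a dyadic block'') control to a global power-law estimate is, I expect, the main technical step of the proof.

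With the key lemma in hand the three required properties become routine. For \emph{essentially decreasing}, when $n\ge k$ we have $w_n/w_k=(k/n)^a(u_n/u_k)\lesssim (n/k)^{\delta-a}$, which is $\le 1$ provided $\delta<a$. For \emph{regularity of $\ww$}, applying the lemma in the form $u_n/u_m\lesssim (m/n)^\delta$ for $n\le m$ gives
\[
\frac{1}{mw_m}\sum_{n=1}^m w_n \;=\; \frac{m^{a-1}}{u_m}\sum_{n=1}^m n^{-a}u_n \;\lesssim\; m^{a-1+\delta}\sum_{n=1}^m n^{-a-\delta},
\]
which is $\lesssim 1$ whenever $\delta<1-a$, since then $\sum_{n=1}^m n^{-a-\delta}\approx m^{1-a-\delta}/(1-a-\delta)$. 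For \emph{regularity of $\ww^*$}, where $w_n^*=n^{a-1}/u_n$, the symmetric estimate $u_m/u_n\lesssim (m/n)^\delta$ yields
\[
\frac{1}{mw_m^*}\sum_{n=1}^m w_n^* \;=\; \frac{u_m}{m^{a}}\sum_{n=1}^m \frac{n^{a-1}}{u_n} \;\lesssim\; m^{\delta-a}\sum_{n=1}^m n^{a-1-\delta},
\]
which is $\lesssim 1$ whenever $\delta<a$.

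Since $0<a<1$, any $\delta\in(0,\min(a,1-a))$ satisfies all three constraints simultaneously, so a single choice of $\delta$ closes the argument and shows that $\ww$ is an essentially decreasing bi-regular weight. As an aside, once the key lemma is available one could alternatively invoke Proposition~\ref{SecondBiRegularResult}, checking condition (iii) via $\sum_{n=1}^m w_{m+1-n}/(nw_n)$, but splitting the sum at the midpoint leads to the same two estimates above, so direct verification seems cleanest.
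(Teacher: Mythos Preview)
Your argument is correct and is a genuinely different, cleaner route than the paper's. The paper does not isolate a polynomial-growth lemma for $(u_n)$; instead it proves in one stroke the two-sided estimate $\sum_{n=1}^m n^{-a}u_n \approx m^{1-a}u_m$ by constructing an explicit telescoping sequence: it sets $c_j=\sup_{2^{j-1}\le n<2^j}u_n$, linearly interpolates $(c_j)$ at dyadic points to obtain $(a_n)$, puts $s_n=n^{1-a}a_n$, and computes directly that $s_n-s_{n-1}\approx n^{-a}u_n$ while $s_n\approx n^{1-a}u_n$. Since $(1/u_n)$ is also asymptotically constant and $\ww^*=(n^{-(1-a)}/u_n)$, the same computation with $a$ replaced by $1-a$ handles the regularity of $\ww^*$, and Lemma~\ref{BiRegularResult} then yields the essentially decreasing conclusion. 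Your approach trades this explicit construction for a sub-polynomial growth estimate on $u_n/u_k$ and three short summation bounds; it is more transparent and avoids the interpolation step, while the paper's method delivers the sharper asymptotic $\sum_{n\le m}w_n\approx mw_m$ at once and exploits the built-in duality $\ww\leftrightarrow\ww^*$ to halve the work.

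One small presentational point: you label the key lemma ``two-sided'' but state only the inequality $u_n/u_k\lesssim(n/k)^\delta$ for $n\ge k$, whereas the regularity-of-$\ww$ step uses the companion inequality $u_n/u_m\lesssim(m/n)^\delta$ for $n\le m$. Your chaining proof does yield both (each ratio within a block is bounded by $c_j$ in either direction), so just make the statement match the usage.
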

 
 \begin{proof} Let $b=1-a$ and  consider $\vv=(v_n)_{n=1}^\infty$ given by  $v_n=1/u_n$. We have 
 $\ww^*=(n^{-b}v_n)_{n=1}^\infty$, with $0<b<1$ and $\vv$ asymptotically constant. Then, taking into account Lemma~\ref{BiRegularResult}
  it suffices to prove 
\[\sum_{n=1}^m n^{-a} u_n \approx t_m:=m^{1-a} u_m,\quad m\in\NN.\]
To that end, put
\[
c_j=\sup_{2^{j-1} \le n < 2^j} u_n, \quad j\in\NN,
\]
and let $(a_n)_{n=1}^\infty$ be  the unique weight such that $a_{2^{j-1}}=c_j$ for $j\in\NN$ and 
$(a_n)_{n=2^{j-1}}^{2^j}$ is in arithmetic progression. Define $(s_n)_{n=0}^\infty$ by $s_0=0$ and
$s_n=n^{1-a} a_n$ for $n\in\NN$. 
Since 
\[
\lim_j \frac{c_{j+1}}{c_j}=1,
\]
we have $t_n\approx s_n$ for $n\in\NN$. Thus we need only show that 
\[s_n-s_{n-1}\approx n^{-a} u_n, \quad n\in\NN.\] Given $n\ge 2$, pick $j=j(n)\in\NN$ such that
$2^{j-1}<n\le 2^j$. We have
\begin{align*}
\frac{s_n-s_{n-1}}{n^{-a} u_n}&=\frac{n(a_n-a_{n-1})}{u_n}+\frac{n^{1-a}-(n-1)^{1-a}}{n^{-a}}\frac{a_{n-1}}{ u_n}\\
&=\frac{n}{2^{j-1}}\frac{c_j}{u_n}+n\left(1-\left(1-\frac{1}{n}\right)^{1-a}\right)\frac{a_{n-1}}{ u_n}\\
&\approx 1.
\end{align*}
 \end{proof}
 
 We close with a most natural question that our work leaves open:
 
 \begin{question} Do there exist $\ww\in\WW$ and $1\leq p<\infty$ so that $g(\ww,p)$ admits a symmetric basis?
 \end{question}

\subsection*{Acknowledgment}
Ben Wallis thanks B\"unyamin Sar{\i} for his helpful comments.

\begin{bibsection}
\begin{biblist}

\bib{AK2016}{book}{
   author={Albiac, F.},
   author={Kalton, N.J.},
   title={Topics in Banach space theory},
   series={Graduate Texts in Mathematics},
   volume={233},
   edition={2},
   publisher={Springer, [Cham]},
   date={2016},
   pages={xx+508},
}

\bib{Al75}{article}{
   author={Altshuler, Z.},
   title={Uniform convexity in Lorentz sequence spaces},
   journal={Israel J. Math.},
   volume={20},
   date={1975},
   number={3-4},
   pages={260--274},
}

\bib{ACL73}{article}{
   author={Altshuler, Z.},
   author={Casazza, P. G.},
   author={Lin, Bor Luh},
   title={On symmetric basic sequences in Lorentz sequence spaces},
   journal={Israel J. Math.},
   volume={15},
   date={1973},
   pages={140--155},
}

\bib{CL74}{article}{
   author={Casazza, P. G.},
   author={Lin, B.L.},
   title={On symmetric basic sequences in Lorentz sequence spaces. II},
   journal={Israel J. Math.},
   volume={17},
   date={1974},
   pages={191--218},
}

\bib{CRS2007}{article}{
   author={Carro, M.J.},
   author={Raposo, J.A.},
   author={Soria, J.},
   title={Recent developments in the theory of Lorentz spaces and weighted
   inequalities},
   journal={Mem. Amer. Math. Soc.},
   volume={187},
   date={2007},
   number={877},
   pages={xii+128},
}

\bib{CJZ11}{article}{
   author={Chen, D.}
   author={Johnson, W.B.},
   author={Zheng, B.},
   title={Commutators on $(\sum\ell_q)_p$},
   journal={Studia Math.},
   volume={206},
   date={2011},
   number={2},
   pages={175--190},
}

 \bib{DKKT2003}{article}{
   author={Dilworth, S.J.},
   author={Kalton, N.J.},
   author={Kutzarova, D.},
   author={Temlyakov, V.N.},
   title={The thresholding greedy algorithm, greedy bases, and duality},
   journal={Constr. Approx.},
   volume={19},
   date={2003},
   number={4},
   pages={575--597},
}

\bib{DKOSZ2014}{article}{
   author={Dilworth, S. J.},
   author={Kutzarova, D.},
   author={Odell, E.},
   author={Schlumprecht, Th.},
   author={Zs{\'a}k, A.},
   title={Renorming spaces with greedy bases},
   journal={J. Approx. Theory},
   volume={188},
   date={2014},
   pages={39--56},
}

\bib{DOSZ11}{article}{
   author={Dilworth, S. J.},
   author={Odell, E.},
   author={Schlumprecht, Th.},
   author={Zs\'ak, A.},
   title={Renormings and symmetry properties of 1-greedy bases},
   journal={J. Approx. Theory},
   volume={163},
   date={2011},
   number={9},
   pages={1049--1075},
}

\bib{Ga68}{article}{
   author={Garling, D. J. H.},
   title={Symmetric bases of locally convex spaces},
   journal={Studia Math.},
   volume={30},
   date={1968},
   pages={163--181},
}

\bib{KPSTT12}{article}{
   author={Kami\'nska, A.},
   author={Popov, A.I.},
   author={Spinu, E.},
   author={Tcaciuc, A.},
   author={Troitsky, V.G.},
   title={Norm closed operator ideals in Lorentz sequence spaces},
   journal={J. Math. Anal. Appl.},
   volume={389},
   date={2012},
   number={1},
   pages={247--260},
}

\bib{LSZ14}{article}{
   author={Lin, P.},
   author={Sar{\i}, B.},
   author={Zheng, B.},
   title={Norm closed ideals in the algebra of bounded linear operators on Orlicz sequence spaces},
   journal={Proceedings of the American Mathematical Society},
   volume={142},
   date={2014},
   number={5},
   pages={1669--1680},
}

\bib{OS15}{article}{
   author={Oikhberg, T.},
   author={Spinu, E.},
   title={Subprojective Banach spaces},
   journal={J. Math. Anal. Appl.},
   volume={424},
   date={2015},
   number={1},
   pages={613--635},
}

\bib{Pu76}{article}{
   author={Pujara, L.R.},
   title={A uniformly convex Banach space with a Schauder basis which is
   subsymmetric but not symmetric},
   journal={Proc. Amer. Math. Soc.},
   volume={54},
   date={1976},
   pages={207--210},
}

\bib{Sa04}{article}{
   author={Sari, B.},
   title={Envelope functions and asymptotic structures in Banach spaces},
   journal={Studia Math.},
   volume={164},
   date={2004},
   number={3},
   pages={283--306},
}

\bib{Wh64}{article}{
   author={Whitley, R.J.},
   title={Strictly singular operators and their conjugates},
   journal={Trans. Amer. Math. Soc.},
   volume={113},
   date={1964},
   pages={252--261},
}

\bib{Zh14}{article}{
   author={Zheng, B.},
   title={Commutators on $(\sum\ell_q)_1$},
   journal={J. Math.  Anal.  Appl.},
   volume={413},
   date={2014},
   number={1},
   pages={284--290},
}

\end{biblist}
\end{bibsection}

\end{document}